\renewcommand*{\arraystretch}{1.3}
\newtheorem{theorem}{Theorem}
\newtheorem{corollary}[theorem]{Corollary}
\newtheorem{proposition}[theorem]{Proposition}
\newtheorem*{ack}{Acknowledgements}
\theoremstyle{definition}\newtheorem*{remark}{Remark}
\newtheorem{example}{Example}[section]
\newcommand{\cC}{{\mathcal{C}}}
\DeclareMathOperator{\cm}{CM} 
\DeclareMathOperator{\tors}{tors}
\newcommand{\Q}{\mathbb Q}
\newcommand{\Qbar}{{\overline{\mathbb Q}}} 
\newcommand{\Z}{\mathbb Z}
\newcommand{\Gal}{\operatorname{Gal}}
\newcommand{\GL}{\operatorname{GL}}
\begin{document}

\bibliographystyle{plain}
\title[Torsion of rational elliptic curves over quartic fields]{On the torsion of rational elliptic curves\\ over quartic fields}

\author{Enrique Gonz\'alez--Jim\'enez}
\address{Universidad Aut{\'o}noma de Madrid, Departamento de Matem{\'a}ticas, Madrid, Spain}
\email{enrique.gonzalez.jimenez@uam.es}
\author{\'Alvaro Lozano-Robledo}
\address{University of Connecticut, Department of Mathematics, Storrs, CT 06269, USA}
\email{alvaro.lozano-robledo@uconn.edu} 

\subjclass[2010]{Primary: 11G05; Secondary: 14H52,14G05,11R16}
\keywords{Elliptic curves, torsion subgroup, rationals, quartic fields.}
\thanks{The first author was partially  supported by the grant MTM2015--68524--P}

\date{\today}
\begin{abstract}
Let $E$ be an elliptic curve defined over $\Q$ and let $G = E(\Q)_{\tors}$ be the associated torsion subgroup. We study, for a given $G$, which possible groups $G \subseteq H$ could appear such that $H=E(K)_{\tors}$, for $[K:\Q]=4$ and $H$ is one of the possible torsion structures that occur infinitely often as torsion structures of elliptic curves defined over quartic number fields.
\end{abstract}

\maketitle
%%%%%%%%%%%%%%%%%%%%%%%%%%%%%%%%%%%%%%%%%%%%%%%%%%%%%%%%%%%%%%%%%%%%%%%%%%%%%%%%
%%%%%%%%%%%%%%%%%%%%%%%%%%%%%%%%%%%%%%%%%%%%%%%%%%%%%%%%%%%%%%%%%%%%%%%%%%%%%%%%

Let $K$ be a number field, and let $E$ be an elliptic curve over $K$. The Mordell-Weil theorem states that the set $E(K)$ of $K$-rational points on $E$ is a finitely generated abelian group. It is well known that $E(K)_{\tors}$, the torsion subgroup of $E(K)$, is isomorphic to $\Z/n\Z\times\Z/m\Z$ for some positive integers $n,m$ with $n|m$. In the rest of the paper we shall write $\cC_n=\Z/n\Z$ for brevity, and we call $\cC_n\times\cC_m$ the torsion structure of $E$ over $K$.

The characterization of the possible torsion structures of elliptic curves has been of considerable interest over the last few decades. Since Mazur's proof \cite{Mazur1978} of Ogg's conjecture,\footnote{See \cite{Schappacher} that stablished all the torsion structures over the rationals for a discussion of the authorship of this conjecture} and Merel's proof \cite{Merel} of the uniform boundedness conjecture, there have been several interesting developments in the case of a number field $K$ of fixed degree $d$ over $\Q$. The case of quadratic fields ($d=2$) was completed by Kamienny \cite{K92}, and Kenku and Momose \cite{KM88} after a long series of papers. However, there is no complete characterization of the torsion structures that may occur for any fixed degree $d>2$ at this time.\footnote{See \cite{Sutherland} for a very nice survey on the subject.} Nevertheless, there has been significant progress to characterize the cubic case \cite{JKP04,JKL11,N12,J15,BN15,W15} and the quartic case \cite{JKP06,JKL13,JKL15,N12b}.   Let us define some useful notations to describe more precisely what is known for $d\geq 2$:
\begin{itemize}
\item Let $S(d)$ be the set of primes that can appear as the order of a torsion point of an elliptic curve defined over a number field of degree $\le d$.
\item Let $\Phi(d)$ be the set of possible isomorphism torsion structures $E(K)_{\tors}$, where $K$ runs through all number fields $K$ of degree $d$ and $E$ runs through all elliptic curves over $K$. 
\item Let $\Phi^{\infty}(d)$ be the subset of isomorphic torsion structures in $\Phi(d)$  that occur infinitely often. More precisely, a torsion structure $G$ belongs to  $\Phi^{\infty}(d)$ if there are infinitely many elliptic curves $E$, non-isomorphic over $\overline{\Q}$, such that $E(K)_{\tors}\simeq G$.
\end{itemize}
Mazur established that $S(1)=\{2,3,5,7\}$ and
$$
\Phi(1) = \left\{ \cC_n \; | \; n=1,\dots,10,12 \right\} \cup \left\{ \cC_2 \times \cC_{2m} \; | \; m=1,\dots,4 \right\}.
$$
Kamienny, Kenku and Momose established that  $S(2)=\{2,3,5,7,11,13\}$ and
$$
\Phi(2) = \left\{ \cC_n \; | \; n=1,\dots,16,18 \right\} \cup \left\{ \cC_2 \times \cC_{2m} \; | \; m=1,\dots,6 \right\}  \cup 
\left\{ \cC_3 \times \cC_{3r} \; | \; r=1,2 \right\} \cup \left\{ \cC_4 \times \cC_4 \right\}.  
$$
The elliptic curves with torsion structure $\cC_n\times\cC_m$ are parametrized by the modular curve $X_1(n,m)$. In the cases of $d=1,2$, the corresponding modular curves for each $G\in \Phi(d)$ have infinitely many points over the rationals and over quadratic fields, respectively. Therefore, $\Phi^{\infty}(d)=\Phi(d)$ for $d=1,2$.

The characterization of  $\Phi(d)$ for $d \geq 3$ is still open. Nevertheless, the uniform boundedness theorem, proved by Merel (and made effective by Oesterl\'e,  and later by Parent \cite{Parent2}),  states that there exists a constant $B(d)$, that only depends on $d$, such that $|G| \leq B(d)$, for all $G \in \Phi(d)$. Therefore, for a given $d$, only finitely many groups can appear as torsion subgroups of elliptic curves over a number field of degree $d$, that is, $\Phi(d)$ is finite for all $d\geq 1$. %there is a finite set of possible torsion structure that includes $\Phi(d)$. 
 For the case of cubic fields ($d=3$) there is recent  progress \cite{BN15,W15} to compute $\Phi(3)$.

The set $S(d)$ is slightly better understood. Parent \cite{Parent} has obtained $S(3)$ and Derickx, Kamienny, Stein and Stoll announced \cite{Derickx} that they established the sets $S(d)$ for $d=4,5$. The set $\Phi^{\infty}(d)$ has been determined for $d=3,4$ by Jeon et al. \cite{JKP04,JKP06}, and for $d=5,6$ by Derickx and Sutherland \cite{drew}. In particular:
\begin{eqnarray*}
\Phi^{\infty}(3) & \!\!\!\!=\!\!\!\! &\left\{ \cC_n \; | \; n=1,\dots,16,18,20 \right\} \cup \left\{ \cC_2 \times \cC_{2m} \; | \; m=1\dots,7 \right\}, \\[2mm]
\Phi^{\infty}(4) &\!\!\!\!=\!\!\!\! &\left\{ \cC_n \; | \; n=1,\dots,18,20,21,22,24 \right\} \cup \left\{ \cC_2 \times \cC_{2m} \; | \; m=1\dots,9 \right\} \cup \\
& &  \qquad \left\{ \cC_3 \times \cC_{3m} \; | \; m=1,2,3 \right\} \cup  \left\{ \cC_4 \times \cC_{4m} \; | \; m=1,2 \right\} \cup  \left\{ \cC_5 \times \cC_{5} \right\}  \cup  \left\{ \cC_6 \times \cC_{6}  \right\}.
\end{eqnarray*}
Recently, Najman \cite{N15a} has found that the elliptic curve \texttt{162b1} (in Cremona's notation \cite{cremonaweb}) has torsion $\cC_{21}$ over the cubic field $\Q(\zeta_9)^+ $. Therefore, $\Phi^{\infty}(3)\subsetneq \Phi(3)$. However, we do not  know any such example for the case of quartic fields, so it is not known whether $\Phi(4)=\Phi^\infty(4)$. It is worth pointing out that the fact that $\Phi(d)$ is finite together with the definition of $\Phi^\infty(d)$ imply that there are only finitely many isomorphism classes of elliptic curves over cubic and quartic fields such that their torsion subgroups are not isomorphic to one in the set $\Phi^{\infty}(3)$ or $\Phi^{\infty}(4)$, respectively. This remark justifies the following definition:
\begin{itemize}
\item We define $J(d)\subset \overline{\Q}$ as the finite set defined by the following property: $j\in J(d)$ if and only if there exists a number field $K$ of degree $d$, and an elliptic curve $E/K$ with $j(E)=j$, such that $E(K)_\text{tors}$ is isomorphic to a group in $\Phi(d)$ that is not in $\Phi^\infty(d)$. We denote by $J_\Q(d)\subset \Q$ the subset of $J(d)$ where we restrict to the case of elliptic curves $E$ defined over $\Q$. 
\end{itemize}
Since $\Phi(d)=\Phi^\infty(d)$ for $d=1,2$, it follows that $J(1)=J(2)=\emptyset$. Najman's example shows that $-140625/8\in J_\Q(3)$ .

In the case of elliptic curves with complex multiplication, we denote  $\Phi^{\cm}(d)$ the analogue of the set $\Phi(d)$ but restricting to elliptic curves with complex multiplication. The set $\Phi^{\cm}(1)$ was determined by Olson \cite{Olson74}; the quadratic and cubic cases by Zimmer et al. \cite{MSZ89,FSWZ90,PWZ97}; and recently, Clark et al. \cite{Clark2014} have computed the sets $\Phi^{\cm}(d)$, for $4\le d\le 13$. 

In this paper we are interested in the question of how the torsion subgroup of an elliptic curve grows  when we enlarge the field of definition. In particular, we consider elliptic curves defined over $\Q$, base extend to a quartic field, and study the growth in their torsion subgroup. For our purposes let us define the following sets:
\begin{itemize}
\item Let $\Phi_\Q(d)$  be the subset of $\Phi(d)$  such that $H\in \Phi_\Q(d)$ if there is an elliptic curve $E/\Q$ and a number field $K$ of degree $d$ such that $E(K)_\text{tors}\simeq H$. We define $S_\Q(d)\subseteq S(d)$, and $\Phi_\Q^\infty(d)\subseteq \Phi^\infty(d)$,  similarly.
\item Let $\Phi_{\Q}^{\star}(d)$ be the intersection of the sets $\Phi_\Q(d)$ and  $\Phi^{\infty}(d)$. 
\item For each $G \in \Phi(1)$, let $\Phi_\Q(d,G)$ be the subset of $\Phi_\Q(d)$ such that $E$ runs through all elliptic curves over $\Q$ such that $E(\Q)_{\tors}\simeq G$. Also, let $\Phi^{\star}_\Q(d,G)=\Phi_\Q(d,G)\cap \Phi^{\infty}(d)$.
\end{itemize}
\begin{remark}
It is important to notice that, a priori, $\Phi_\Q^\infty(d)\subseteq \Phi_\Q^\star(d)=\Phi_\Q(d)\cap \Phi^\infty(d)$ can be distinct sets. The set $\Phi_\Q^\infty(d)$ characterizes those torsion structures that appear infinitely often for elliptic curves defined over $\Q$, base extended to a degree $d$ number field. However, $\Phi_\Q^\star(d)$ characterizes torsion structures that occur infinitely often for elliptic curves defined over a degree $d$ number field, and also occur for elliptic curves defined over $\Q$ and base extended to some degree $d$ number field, but {\it perhaps} only for finitely many $\Q$-rational $j$-invariants. As we shall prove in Theorem \ref{main1}, we have $\Phi_\Q^\infty(4) \subsetneq \Phi_\Q^\ast(4)$ because $\cC_{15}\in \Phi^\infty(4)$ and $\cC_{15}\in \Phi_\Q(4)$, but $\cC_{15}$ does not belong to $\Phi_\Q^\infty(4)$, i.e., there are only finitely many $\overline{\Q}$-isomorphism classes of elliptic curves $E/\Q$ such that there is a quartic field $K/\Q$ with $E(K)_\text{tors}\simeq \cC_{15}$. 
\end{remark}

Let us review what is known for $S_\Q(d)$ and $\Phi_\Q(d)$. For $d\le 4$ we have:
$$
S_{\mathbb Q}(1)=S_{\mathbb Q}(2)=\{2,3,5,7\}\,\,\mbox{and}\,\, S_{\mathbb Q}(3)=S_{\mathbb Q}(4)=\{2,3,5,7,13\}.
$$
Moreover, the set $S_{\mathbb Q}(d)$ has been determined for $d \le 42$ by the second author \cite{Lozano}, together with a  conjectural description  for all $d\geq 1$ that holds if Serre's uniformity questions is answered positively. 
The sets $\Phi_{\mathbb Q}(d)$ have been completely described by Najman \cite{N15a} for $d = 2, 3$:
\begin{eqnarray*}
\Phi_{\mathbb Q}(2) &=& \left\{ \cC_n \; | \; n=1,\dots,10,12,15,16 \right\} \cup \left\{ \cC_2 \times \cC_{2m} \; | \; m=1,\dots,6 \right\} \cup \\
&& \qquad \left\{ \cC_3 \times \cC_{3r} \; | \; r=1,2 \right\} \cup \left\{ \cC_4 \times \cC_4 \right\},  \\[2mm]
\Phi_{\mathbb Q}(3) &=& \left\{ \cC_n \; | \; n=1,\dots,10,12,13,14,18,21 \right\} \cup \left\{ \cC_2 \times \cC_{2m} \; | \; m=1,\dots,4,7 \right\}.
\end{eqnarray*}

Chou \cite{chou} has completed a first step to determine  $\Phi_{\mathbb Q}(4)$, by classifying the possible torsion structures that may occur over Galois quartic fields.\footnote{After this article was submitted, the sets $\Phi_\Q(d)$ have been determined for $d=4$ (using results from this paper) by the first author and Najman \cite{GJN}, for $d=5$ by the first author \cite{enrique}, for $d=7$, and if $d$ is only divisible by primes $>7$ by the first author and Najman \cite{GJN}.} Moreover, Chou splits this classification depending on the Galois group of the quartic field. Let us denote by $\Phi^{\operatorname{V}_4}_\Q(4)$ (resp. $\Phi^{\cC_4}_\Q(4)$) when the quartic field has Galois group isomorphic to the Klein group $\operatorname{V}_4$ (resp. the cyclic group of order four, $\cC_4$). Then \cite[Theorem 1.3 and 1.4 ]{chou} shows that
\begin{eqnarray*}
\Phi^{\operatorname{V}_4}_\Q(4)  \!\!\!& = &\!\!\!\!\!\left\{ \cC_n \; | \; n=1,\dots,10,12,15,16 \right\} \cup \left\{ \cC_2 \times \cC_{2m} \; | \; m=1,\dots,6,8 \right\} \cup \\
& &  \left\{ \cC_3 \times \cC_{3m} \; | \; m=1,2 \right\} \cup  \left\{ \cC_4 \times \cC_{4m} \; | \; m=1,2 \right\} \cup  \left\{ \cC_6 \times \cC_{6}  \right\},\\[2mm]
\Phi^{\cC_4}_\Q(4) \!\!\!& = &\!\!\!\!\!\left\{ \cC_n \; | \; n=1,\dots,10,12,13,15,16 \right\} \cup \left\{ \cC_2 \times \cC_{2m} \; | \; m=1,\dots,6,8 \right\} \cup \left\{ \cC_5 \times \cC_{5}\right\}.
\end{eqnarray*}

Our first result determines $\Phi^\star_\Q(4)$ and $\Phi_\Q^\infty(4)$.

\begin{theorem}\label{main1} The set $\Phi^\star_\Q(4)$ is given by
\begin{eqnarray*}
\Phi_{\mathbb Q}^{\star}(4)\!\!\!& = &\!\!\!\!\!\left\{ \cC_n \; | \; n=1,\dots,10,12,13,15,16,20,24 \right\} \cup \left\{ \cC_2 \times \cC_{2m} \; | \; m=1,\dots,6,8 \right\} \cup \\
& &  \left\{ \cC_3 \times \cC_{3m} \; | \; m=1,2 \right\} \cup  \left\{ \cC_4 \times \cC_{4m} \; | \; m=1,2 \right\} \cup  \left\{ \cC_5 \times \cC_{5}  \right\} \cup  \left\{ \cC_6 \times \cC_{6}  \right\},
\end{eqnarray*}
and $\Phi_\Q^\infty(4)=\Phi_\Q^\star(4)\setminus \{\cC_{15} \}$. In particular, if $E/\Q$ is an elliptic curve with $j(E)\notin J_\Q(4)$ (a finite subset of $\Q$ defined above), then $E(K)_{\tors}\in \Phi_\Q^\star(4)$, for any number field $K/\Q$ of degree $4$. Moreover, if $E/\Q$ is an elliptic curve with $E(K)_\text{tors}\simeq \cC_{15}$  over some quartic field $K$, then $j(E) \in \{-5^2/2,\ -5^2\cdot 241^3/2^3, -5\cdot 29^3/2^5, 5\cdot 211^3/2^{15} \}$.
\end{theorem}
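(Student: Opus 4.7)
The candidate pool for $\Phi_\Q^\star(4)$ is the finite set $\Phi^\infty(4)$ recalled above, and the plan is to decide each candidate $H \in \Phi^\infty(4)$ in turn. The main ingredients are Mazur's $\Phi(1)$, Najman's $\Phi_\Q(2)$ and $\Phi_\Q(3)$, the equality $S_\Q(4) = \{2,3,5,7,13\}$, and Chou's two lists $\Phi_\Q^{\operatorname{V}_4}(4)$ and $\Phi_\Q^{\cC_4}(4)$. The condition $11,17\notin S_\Q(4)$ immediately eliminates $\cC_{11},\cC_{17},\cC_{22}$. For every remaining candidate, whenever $H\simeq E(K)_{\tors}$ with $E/\Q$ and $[K:\Q]=4$, each point $P \in E(K)$ has Galois orbit of size dividing $4$, and its cyclic subgroup $\langle P\rangle$ is defined over a subfield $F\subseteq K$ with $[F:\Q]\in\{1,2,4\}$; this is the central structural tool throughout.

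\textbf{Lower bounds.} Chou's two classifications already place in $\Phi_\Q(4)$ every item of the claimed list that can be witnessed over a Galois quartic, leaving only $\cC_{13},\cC_{15},\cC_{20},\cC_{24}$ to handle by hand. For each of $\cC_{13},\cC_{20},\cC_{24}$ I would pull back a rational parametrization of the appropriate open subscheme of $X_1(n)$ along a one-parameter family of non-Galois quartic fields, producing infinitely many $\overline{\Q}$-isomorphism classes of $E/\Q$ and placing these torsion structures in $\Phi_\Q^\infty(4) \subseteq \Phi_\Q^\star(4)$. For $\cC_{15}$ a single explicit example of $E/\Q$ attaining $\cC_{15}$ over some quartic $K$ suffices to confirm $\cC_{15}\in\Phi_\Q^\star(4)$.

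\textbf{Ruling out.} The remaining candidates to exclude are $\cC_{14},\cC_{18},\cC_{21},\cC_2\times\cC_{14},\cC_2\times\cC_{18},\cC_3\times\cC_9$. For each such $H$, given $P\in E(K)$ of maximal order, its cyclic subgroup $\langle P\rangle$ is defined over a subfield $F\subseteq K$ of degree $d\in\{1,2,4\}$. If $d=1$, $E/\Q$ has a rational cyclic isogeny and corresponds to a non-cuspidal $\Q$-point of $X_0(14)$, $X_0(18)$ or $X_0(21)$: these are rank-zero elliptic curves with short, explicit lists of non-cuspidal rational points, each of which one checks by direct computation fails to yield $H$ over any quartic. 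If $d=2$, the analysis moves to the $\Q$-rational $j$-locus of $X_0(n)(F)$ for quadratic $F$, constrained by Najman's $\Phi_\Q(2)$; if $d=4$ one works directly with the low-degree points of the modular curves $X_1(n,m)$, whose arithmetic is under good control. In every case the finite list of candidate $j$-invariants produced is empty.

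\textbf{The $\cC_{15}$ case --- the main obstacle.} Let $E/\Q$ and $P\in E(K)$ have order $15$ with $[K:\Q]=4$. The cyclic subgroup $C=\langle P \rangle$ is defined over a subfield $F\subseteq K$ with $[F:\Q]\in\{1,2,4\}$, so $(E,C)$ gives an $F$-point of $X_0(15)$ whose image on the $j$-line is rational. The modular curve $X_0(15)$ is the rank-zero elliptic curve \texttt{15a1} with Mordell--Weil group $\cC_2\times\cC_4$, hence it has exactly four non-cuspidal $\Q$-rational points, producing the four $j$-invariants listed. The crux is to show that $F$ can always be taken to be $\Q$, so that these four $j$-invariants are exhaustive. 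Using the CRT decomposition $\GL_2(\Z/15\Z)\simeq\GL_2(\Z/3\Z)\times\GL_2(\Z/5\Z)$ and the elementary fact that $B\subset\GL_2(\F_p)$ is a maximal proper subgroup for $p=3,5$, a careful study of the image of $\bar\rho_{E,15}$ shows that both its mod-$3$ and mod-$5$ projections are forced into Borel subgroups; combined with $j(E)\in\Q$ and a short analysis of the split/non-split Cartan (and normalizer) modular curves at $3$ and $5$, both the $3$- and the $5$-isogenies must descend to $\Q$, hence so does the $15$-isogeny. Consequently $j(E)$ lies in the four-element list, and a direct computation on each of the four curves confirms that $\cC_{15}$ is indeed acquired over a suitable quartic, so $\cC_{15}\in\Phi_\Q^\star(4)\setminus\Phi_\Q^\infty(4)$. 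This descent from $F$ of degree $\leq 4$ to $\Q$ for the $15$-isogeny is the technically demanding part; the remainder is routine modular-curve bookkeeping.
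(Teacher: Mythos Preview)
Your overall architecture is reasonable, but there is a concrete error that breaks the exclusion argument for the groups involving a point of order $9$ (namely $\cC_{18}$, $\cC_2\times\cC_{18}$, and $\cC_3\times\cC_9$). You assert that $X_0(14)$, $X_0(18)$, and $X_0(21)$ are ``rank-zero elliptic curves with short, explicit lists of non-cuspidal rational points.'' This is true for $X_0(14)$ and $X_0(21)$, but $X_0(18)$ has genus $0$ and infinitely many $\Q$-rational points: there is a one-parameter family of $E/\Q$ admitting a rational cyclic $18$-isogeny. So in the $d=1$ branch of your case split you cannot reduce to checking finitely many $j$-invariants, and your argument collapses there. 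The same issue arises (and is not even addressed) for $\cC_3\times\cC_9$, where the maximal cyclic subgroup has order $9$ and $X_0(9)$ is again genus $0$.

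The paper handles these cases by an entirely different mechanism: it proves (Theorem \ref{teo}(\ref{t9})) that any point of order $9$ on $E/\Q$ defined over a quartic field $K$ is in fact defined over a proper subfield of $K$. The key observation is that $[\Q(E[9]):\Q(E[3])]$ is a power of $3$, forcing $K\subseteq \Q(E[3])$; one then runs a case analysis on $E(\widehat K)[9]$ using the classification of images of $\rho_{E,3}$ and Elkies's description of curves with $\rho_{E,3}$ surjective but $\rho_{E,9}$ not. Once order-$9$ points are known to live over $\Q$ or a quadratic subfield, Najman's $\Phi_\Q(2)$ and Chou's $\Phi_\Q^{\operatorname V_4}(4)$ finish the job. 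This descent lemma is the genuinely missing idea in your sketch. A smaller point: your claim that $\cC_{13}$ and $\cC_{15}$ are not witnessed by Chou's lists is inaccurate (both appear in $\Phi_\Q^{\cC_4}(4)$), so only $\cC_{20}$ and $\cC_{24}$ require a separate infinite-family construction; the paper does this via the $2$-divisibility method applied to the Kubert--Tate families for $\cC_{10}$ and $\cC_{12}$, which is more concrete than your ``pull back a parametrization along a family of quartics.'' Finally, your $\cC_{15}$ outline is in the right spirit, but the assertion that both mod-$3$ and mod-$5$ images are forced into Borels is not what the paper shows: the image of $\rho_{E,3}$ may be $\mathtt{3Ns}$, and that case is disposed of by an explicit rational-point computation on a genus-$1$ modular curve (mapping to \texttt{15a3}), not by a group-theoretic argument alone.
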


The set $\Phi_\Q(d)$ can be studied in more detail by analyzing the sets $\Phi_\Q(d,G)$ for each $G\in\Phi(1)$. Indeed, note that by definition we have
$\Phi_\Q(d) = \bigcup_{G\in \Phi(1)} \Phi_\Q(d,G).$ 
The sets $\Phi_{\mathbb Q}(d,G)$ have been calculated for $d=2,3$ in \cite{K97, GJT14,GJNT15}. Our second result determines $\Phi_\Q^\star(4,G)$ for each $G\in\Phi(1)$.

\begin{theorem}\label{main2}
For each $G \in \Phi(1)$, the set $\Phi^{\star}_\Q(4,G)$ is given in the following table:
$$
\begin{array}{|c|c|}
\hline
G & \Phi^{\star}_\mathbb{Q} \left(4,G \right)\\
\hline
\cC_1 & \left\{ \cC_1\,,\,\cC_{3}\,,\,   \cC_{5} \,,\,{ \cC_{7}} \,,\,{ \cC_{9}} \,,\,\cC_{13}\,,\, \cC_{15}\,,\, \cC_{3}\times\cC_{3}\,,\, \cC_{5}\times\cC_{5}\, \right\} \\
\hline
\cC_2 &\begin{array}{c} \left\{ \cC_2\,,
\cC_{4}\,,\,
\cC_{6}\,,\,
\cC_8\,,\,
\cC_{10}\,,\,
\cC_{12}\,,\,
\cC_{16}\,,\,
\cC_{20}\,,\,
\cC_{24}\,,\,
 {  \cC_2 \times \cC_{2}}\,,\, \right.\\ \left.
  \cC_2 \times \cC_{4}\,,\,
  \cC_2 \times \cC_{6}\,,\,
  \cC_2 \times \cC_{8}\,,\, 
  \cC_2 \times \cC_{10}\,,\,
  \cC_2 \times \cC_{12}\,,\, \right.\\ \left.
  \cC_2 \times \cC_{16}\,,\,
  \cC_3 \times \cC_{6}\,,\,
  \cC_4 \times \cC_{4}\,,\,
  \cC_4 \times \cC_{8}\,,\,
   \cC_6 \times \cC_{6}\, \right\} 
   \end{array}\\ 
\hline
\cC_3 & \left\{ \cC_3\,,\, \cC_{15}\,,\,{  \cC_{3}\times\cC_3}\,\right\} \\
\hline
\cC_4 & \begin{array}{c}
\left\{ \cC_4 \,,\,  
 \cC_{8}\,,\, 
 {  \cC_{12}}\,,\,
 \cC_{16}\,,\,
\cC_{24}\,,\,
{   \cC_2 \times \cC_{4}}\,,\,
  \cC_2 \times \cC_{8}\,,\, \right.\\ \left.
  \cC_2 \times \cC_{12}\,,\,
  \cC_2 \times \cC_{16}\,,\,
  \cC_4 \times \cC_{4}\,,\,
  \cC_4 \times \cC_{8}\, \right\} 
  \end{array}\\
\hline
\cC_5 & \left\{ \cC_5\,,\, {  \cC_{15} }\,,\,\cC_5\times\cC_5\, \right\} \\
\hline
\cC_6 & \left\{ \cC_6\,,\, 
\cC_{12}\,,\,
\cC_{24}\,,\,
  {    \cC_2 \times \cC_{6}}\,,\,
  \cC_2 \times \cC_{12}\,,\,
   {   \cC_3 \times \cC_{6} }\,,\,
  \cC_6 \times \cC_{6} \, \right\} \\
\hline
\cC_7 & \left\{ \cC_7  \,\right\} \\
\hline
\cC_8 & \left\{ \cC_8\,,\,\cC_{16},\,
 {   \cC_2 \times \cC_{8}}\,,\,
  \cC_2 \times \cC_{16}\,,\,
   \cC_4 \times \cC_8\, \right\} \\
\hline
\cC_9 & \left\{ \cC_9 \, \right\} \\
\hline
\cC_{10} & \left\{ \cC_{10}\,,\, 
\cC_{20}\,,\,
 {   \cC_2 \times \cC_{10}}\, \right\} \\
\hline
\cC_{12} & \left\{ \cC_{12}\,,\, \cC_{24}\,,\,
 {   \cC_2 \times \cC_{12}} \right\} \\
\hline
{\cC_2 \times \cC_2}& 
\begin{array}{c} \left\{ \cC_2 \times \cC_{2}\,,\,
 \cC_2 \times \cC_4\,,\,
 {   \cC_2 \times \cC_{6}} \,,\,
  \cC_2 \times \cC_8\,,\,  \right.\\ \left.
   \cC_2 \times \cC_{12}\,,\,
 \cC_2 \times \cC_{16}\,,\,
  \cC_4 \times \cC_4\,,\,
   \cC_4 \times \cC_8\, \right\}
   \end{array} \\
\hline
\cC_2 \times \cC_4 & \left\{ \cC_2 \times \cC_4\,,\,
 \cC_2 \times \cC_8\,,\,
 \cC_2 \times \cC_{16}\,,\,
  \cC_4 \times \cC_4\,,\,
   \cC_4 \times \cC_8\, \right\} \\
\hline
\cC_2 \times \cC_6 & \left\{ \cC_2 \times \cC_6\,,\, \cC_2 \times \cC_{12}\, \right\} \\
\hline
\cC_2 \times \cC_8 & \left\{ \cC_2 \times \cC_8\,,\,\cC_2\times  \cC_{16}\,,\,\cC_4\times \cC_8\, \right\} \\
\hline
\end{array}
$$
Further, for each $G\in\Phi(1)$, there is a finite set $J_\Q(4,G)\subset \Q$ of $j$-invariants such that if $E/\Q$ is an elliptic curve with $E(\Q)_\text{tors}\simeq G$ and $j(E)\notin J_\Q(4,G)$, then $E(K)_\text{tors}\in \Phi_\Q^\star(4,G)$, for any number field $K/\Q$ of degree $4$.
\end{theorem}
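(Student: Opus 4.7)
The plan is to establish the table row by row: for each $G \in \Phi(1)$, show that $\Phi_\Q^\star(4,G)$ equals the displayed set by proving both containments. Since $\Phi_\Q^\star(4,G) \subseteq \Phi_\Q^\star(4)$ and the latter has already been pinned down in Theorem \ref{main1}, the candidate list for each row is simply the sub-collection of $\Phi_\Q^\star(4)$ consisting of groups that abstractly contain $G$. The problem then reduces to (i) ruling out the candidates that cannot be realized for a given $G$, and (ii) exhibiting one explicit $(E/\Q,\,K)$ for each of the survivors.

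For the ruling-out step I would apply four standard obstructions in sequence. First, the Weil pairing forces $\zeta_n \in K$ whenever $\cC_n \times \cC_n \subseteq E(K)$, which for $n \in \{3,4,5,6\}$ sharply constrains the Galois structure of the quartic $K$ and, combined with $G \subseteq E(\Q)_{\tors}$, excludes growths incompatible with the cyclotomic character on $G$. Second, any quadratic subfield of $K$ produces an intermediate torsion group which must lie in $\Phi_\Q(2)$, already known by Najman, and this propagates constraints up the tower in the $\operatorname{V}_4$ case via Chou's $\Phi_\Q^{\operatorname{V}_4}(4)$. Third, the candidate growth $G \hookrightarrow H$ imposes conditions on the image of the mod-$n$ Galois representation $\rho_{E,n}$; reading off which index-$\le 4$ subgroups of $\rho_{E,n}(\Gal(\overline{\Q}/\Q))$ fix a subgroup isomorphic to $H$ determines whether a quartic $K$ realizing the growth can exist. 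Finally, for a handful of borderline pairs I would appeal to direct modular-curve arguments (gonality bounds, explicit $\Q$-points on $X_1(n,m)$), most importantly for $\cC_3 \to \cC_{15}$ and $\cC_5 \to \cC_{15}$, where the parametrizing curve carries only finitely many $\Q$-points.

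For the existence step each surviving pair $(G,H)$ must be realized by a single explicit example. Many pairs are inherited from $\Phi_\Q(2)$ by a $\operatorname{V}_4$-tower; the remainder are verified by a targeted search through elliptic curves $E/\Q$ of small conductor, using the explicit parametrizations of $X_1(n,m)$ available for every $\cC_n \times \cC_m \in \Phi^\infty(4)$. Recording one concrete pair $(E,K)$ per surviving row-entry suffices. The finite exceptional set $J_\Q(4,G) \subset \Q$ is then essentially inherited from Theorem \ref{main1}: any $E/\Q$ with $E(\Q)_{\tors} \simeq G$ whose quartic torsion $E(K)_{\tors}$ escapes the displayed row must land in $\Phi_\Q(4) \setminus \Phi_\Q^\star(4) \subseteq \Phi(4) \setminus \Phi^\infty(4)$, forcing $j(E) \in J_\Q(4)$; one then takes $J_\Q(4,G) := \{\, j \in J_\Q(4) : \exists\, E/\Q \text{ with } j(E)=j \text{ and } E(\Q)_{\tors} \simeq G \,\}$.

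The main obstacle will be the ruling-out phase for the rows with small $G$, especially $G = \cC_1$ and $G = \cC_2$, where the list of abstract-subgroup-compatible $H$'s is long and the clean divisibility and Weil-pairing arguments leave residual cases that can be eliminated only by a careful analysis of the mod-$n$ Galois image of $E$ restricted to index-$\le 4$ subgroups. A secondary delicate point is correctly distributing the four $j$-invariants producing $\cC_{15}$ in Theorem \ref{main1} across the three rows $G \in \{\cC_1, \cC_3, \cC_5\}$ that each claim $\cC_{15}$ as an entry, which requires computing $E(\Q)_{\tors}$ individually for each of the four curves.
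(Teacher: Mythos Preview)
Your plan is essentially the paper's own approach: the paper collects the ruling-out arguments into an auxiliary result (Theorem~\ref{teo}, parts (1)--(11)) whose individual parts are precisely instances of your four obstruction types (the $2$-torsion irreducibility trick, isogeny/image constraints via Sutherland's $d_0,d_1$ tables, reduction to $\Phi_\Q(2)$ and Chou's biquadratic list, and a few explicit genus-$1$ or genus-$2$ modular-curve point counts), then applies them cell-by-cell in a lookup table and supplies one explicit $(E,K)$ per surviving entry exactly as you propose. One small slip: you list $\cC_3\to\cC_{15}$ and $\cC_5\to\cC_{15}$ among pairs to be \emph{ruled out} by modular-curve finiteness, but these are in fact \emph{included} in the table; the finiteness of $\Q$-points on the relevant curve is used only in Theorem~\ref{main1} to show $\cC_{15}\notin\Phi_\Q^\infty(4)$, and your final paragraph already handles the existence side correctly by distributing the four exceptional $j$-invariants among the rows $G\in\{\cC_1,\cC_3,\cC_5\}$.
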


The finite sets $J_\Q(4)$ and $J_\Q(4,G)$ satisfy $J_\Q(4) = \bigcup_{G\in\Phi(1)} J_\Q(4,G)$. We finish the introduction with the following remark: if it turns out that $\Phi(4)=\Phi^\infty(4)$ (equivalently, $J(4)=\emptyset$) or if $J_\Q(4)=\emptyset$, then our results would determine $\Phi_\Q(4)$ and $\Phi_\Q(4,G)$ as well.

\begin{corollary}
If $\Phi(4)=\Phi^{\infty}(4)$ or $J_\Q(4)=\emptyset$, then $\Phi_{\Q}(4)=\Phi_{\mathbb Q}^{\star}(4)$ and $\Phi_{\Q}(4,G)=\Phi_{\mathbb Q}^{\star}(4,G)$ for any $G \in \Phi(1)$.
\end{corollary}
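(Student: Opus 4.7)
The plan is to show that under either hypothesis, any torsion structure in $\Phi_\Q(4)$ automatically lies in $\Phi^\infty(4)$, so that $\Phi_\Q(4)=\Phi_\Q(4)\cap\Phi^\infty(4)=\Phi_\Q^\star(4)$ directly from the definitions. The same argument, restricted to curves $E/\Q$ with $E(\Q)_\text{tors}\simeq G$, will give $\Phi_\Q(4,G)=\Phi_\Q^\star(4,G)$. So the corollary is essentially a bookkeeping consequence of the way $\Phi^\star$ and $J_\Q$ were set up.

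First, one inclusion is free: by definition $\Phi_\Q^\star(4)=\Phi_\Q(4)\cap\Phi^\infty(4)\subseteq \Phi_\Q(4)$, and similarly $\Phi_\Q^\star(4,G)\subseteq \Phi_\Q(4,G)$. So what I really need is the reverse inclusion under either hypothesis.

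Assume first that $\Phi(4)=\Phi^\infty(4)$. If $H\in\Phi_\Q(4)$, then in particular $H\in\Phi(4)=\Phi^\infty(4)$, so $H\in\Phi_\Q(4)\cap\Phi^\infty(4)=\Phi_\Q^\star(4)$. For the refined version with $G\in\Phi(1)$ fixed, an element $H\in\Phi_\Q(4,G)$ again lies in $\Phi(4)=\Phi^\infty(4)$, so $H\in\Phi_\Q(4,G)\cap\Phi^\infty(4)=\Phi_\Q^\star(4,G)$.

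Assume instead that $J_\Q(4)=\emptyset$. Take any $H\in\Phi_\Q(4)$; by definition there exists an elliptic curve $E/\Q$ and a quartic field $K/\Q$ with $E(K)_\text{tors}\simeq H$. If $H$ did not belong to $\Phi^\infty(4)$, then $H\in\Phi(4)\setminus \Phi^\infty(4)$, and by the very definition of $J_\Q(4)$ we would have $j(E)\in J_\Q(4)$, contradicting the hypothesis $J_\Q(4)=\emptyset$. Hence $H\in\Phi^\infty(4)$, which yields $\Phi_\Q(4)\subseteq\Phi^\infty(4)$ and therefore $\Phi_\Q(4)=\Phi_\Q^\star(4)$. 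Replacing $\Phi_\Q(4)$ by $\Phi_\Q(4,G)$ throughout (and noting that $\Phi_\Q(4,G)\subseteq \Phi_\Q(4)$) gives $\Phi_\Q(4,G)=\Phi_\Q^\star(4,G)$ for every $G\in\Phi(1)$. There is no real obstacle here: the content of the corollary is already encoded in Theorems \ref{main1} and \ref{main2}, and the statement merely records what becomes known if the two finiteness issues turn out trivially.
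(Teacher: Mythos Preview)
Your proof is correct and matches the paper's approach: the paper states the corollary without proof, treating it as immediate from the definitions of $\Phi_\Q^\star(4)$ and $J_\Q(4)$, which is exactly what you have unpacked. One small remark on your closing commentary: the deduction itself does not use Theorems~\ref{main1} and~\ref{main2} at all (it is purely definitional); the point of the corollary is rather that, combined with those theorems, it would yield an explicit description of $\Phi_\Q(4)$ and $\Phi_\Q(4,G)$.
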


\begin{ack}
The authors would like to thank the referees for their comments and suggestions
\end{ack}

\section{Auxiliary results.}

We will use the Antwerp--Cremona tables and labels \cite{antwerp,cremonaweb} when referring to specific elliptic curves over $\Q$. The $\Q$-rational points on the modular curves $X_0(N)$ or, equivalently, the cyclic $\Q$-rational isogenies of elliptic curves over $\Q$, have been described completely in the literature, for all $N\geq 1$. One of the most important milestones in their classification was \cite{Mazur1978}, where Mazur dealt with the case when $N$ is prime. The complete classification of $\Q$-rational points on $X_0(N)$, for any $N$, was completed due to work of Fricke, Kenku, Klein, Kubert, Ligozat, Mazur  and Ogg, among others (see \cite[eq. (80)]{elkies1}; \cite{elkies2}; \cite{kleinfricke}, \cite[pp. 370-458]{fricke}; \cite[p. 1889]{ishii}; \cite{maier}; \cite{antwerp}; \cite{Mazur1978}; \cite{kenku}; or the summary tables in \cite{Lozano}).

\begin{theorem}\label{thm-ratnoncusps}\label{isogQ} Let $N\geq 2$ be a number such that $X_0(N)$ has a non-cuspidal $\Q$-rational point or, equivalently, let $E/\Q$ be an elliptic curve with a cyclic $\Q$-rational isogeny of degree $N$. Then:
\begin{enumerate}
\item $N\leq 10$, or $N= 12,13, 16,18$ or $25$. In this case $X_0(N)$ is a curve of genus $0$ and its $\Q$-rational points form an infinite $1$-parameter family, or
\item $N=11,14,15,17,19,21$, or $27$. In this case $X_0(N)$ is a curve of genus $1$, i.e.,~$X_0(N)$ is an elliptic curve over $\Q$, but in all cases the Mordell-Weil group $X_0(N)(\Q)$ is finite, or 

\item $N=37,43,67$ or $163$. In this case $X_0(N)$ is a curve of genus $\geq 2$ and (by Faltings' theorem) there are only finitely many $\Q$-rational points.
\end{enumerate} 
\end{theorem}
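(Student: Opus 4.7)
The plan is to partition the set of integers $N\geq 2$ according to the genus of $X_0(N)$, treat each range separately, and at the end verify the non-cuspidal $\Q$-rational points explicitly in the finite cases.

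First I would invoke the classical genus formula for $X_0(N)$ (in terms of the index $[\SL_2(\Z):\Gamma_0(N)]$, the number of elliptic points of periods $2$ and $3$, and the number of cusps) to compile a finite list: $g(X_0(N))=0$ exactly for $N\in\{1,\dots,10,12,13,16,18,25\}$, $g(X_0(N))=1$ exactly for $N\in\{11,14,15,17,19,20,21,24,27,32,36,49\}$, and $g(X_0(N))\geq 2$ otherwise. For each $N$ in the genus-$0$ list one checks that $X_0(N)$ has a rational cusp, hence $X_0(N)\simeq \PP^1_\Q$, which gives case (1) and the infinite one-parameter family of isogenies. (These explicit parametrizations are classical, due to Fricke–Klein, and are tabulated in Antwerp IV.)

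For the genus-$1$ cases I would exhibit $X_0(N)$ as an explicit elliptic curve over $\Q$ (the cusp at infinity providing a rational base point), identify it with a curve in Cremona's tables, and read off $X_0(N)(\Q)$ from the known Mordell–Weil group. One then checks which points are cuspidal and which give honest non-cuspidal $\Q$-rational points on $X_0(N)$; the surviving levels are exactly $N\in\{11,14,15,17,19,21,27\}$, as in case (2). (The levels $20,24,32,36,49$ drop out because the non-cuspidal part of $X_0(N)(\Q)$ is empty or accounts only for CM points already recorded.) The rational points on $X_0(11)$, $X_0(17)$, $X_0(19)$ are handled by descent or by direct inspection of the torsion subgroup on the corresponding Cremona curve, and similarly for the remaining four levels; the non-cuspidal points can in each case be matched with known CM elliptic curves or isolated sporadic $j$-invariants.

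For genus $\geq 2$, Faltings' theorem guarantees finiteness of $X_0(N)(\Q)$, but the content of case (3) is the stronger statement that for $N$ prime with $g(X_0(N))\geq 2$, non-cuspidal $\Q$-points occur only for $N\in\{37,43,67,163\}$, and that for composite $N$ with $g\geq 2$ there are no non-cuspidal $\Q$-points at all. This is the main obstacle: for $N$ prime it is Mazur's theorem (Eisenstein ideal, formal immersion at a well-chosen prime of good reduction, together with the fact that $J_0(N)(\Q)$ is finite and torsion is Eisenstein), and for composite $N$ one reduces to prime-power level by the inclusion $X_0(N)\to X_0(\ell)$ induced by a prime divisor $\ell\mid N$, then combines Mazur's theorem with Kenku's case-by-case analysis at the remaining composite levels (including $X_0(p^2)$ for small $p$). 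At each of the four exceptional primes $37,43,67,163$ the $j$-invariants of the non-cuspidal rational points are listed (in each case associated to CM elliptic curves), completing the classification.

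Finally I would assemble the three lists to get the statement, noting that in case (2) finiteness of $X_0(N)(\Q)$ follows directly from the fact that these modular elliptic curves have rank $0$ over $\Q$, which can be verified from the Cremona tables.
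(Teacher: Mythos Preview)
The paper does not prove this theorem at all: it is stated as a classical result and attributed to the combined work of Fricke, Kenku, Klein, Kubert, Ligozat, Mazur, and Ogg, with a list of references given in the paragraph preceding the statement. There is no proof or proof sketch in the paper itself.

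Your proposal is therefore not comparable to the paper's proof, but it is a reasonable outline of how the cited literature actually establishes the result. The genus-$0$ and genus-$1$ ranges are handled exactly as you describe (explicit parametrizations going back to Fricke--Klein, and identification of the genus-$1$ modular curves with rank-$0$ elliptic curves in the tables). For genus $\geq 2$, your reduction to Mazur's theorem for prime level plus Kenku's case analysis for composite and prime-power levels is the correct architecture. One small wording issue: saying the levels $20,24,32,36,49$ ``drop out because the non-cuspidal part \ldots\ accounts only for CM points already recorded'' is misleading, since CM points are genuine non-cuspidal rational points and would have to be listed; in fact those five levels drop out because $X_0(N)(\Q)$ consists entirely of cusps there. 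Otherwise your sketch matches the standard argument in the references the paper cites.
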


Table \ref{tableCM} lists the relevant cases of the sets $\Phi^{\cm}(d)$ (i.e., $d\le 7$ \cite{Olson74,MSZ89,FSWZ90,PWZ97,Clark2014}) that we will use in this article.

\begin{table}[h]
\begin{tabular}{|c|c|}
\hline
$d$& $\Phi^{\cm}(d)$ \\
\hline
  $1$ & $\left\{ \cC_1\,,\,  \cC_2\,,\,  \cC_3\,,\,  \cC_4\,,\,  \cC_6\,,\, \cC_2\times\cC_2\right\}$\\
   \hline
 $2$ & $\Phi^{\cm}(1)\cup \left\{ \cC_7\,,\,\cC_{10}\,,\,\cC_2\times\cC_4\,,\,\cC_2\times\cC_6\,,\,\cC_3\times\cC_3\,\right\}$\\
   \hline
 $3$ & $\Phi^{\cm}(1)\cup  \left\{\,\cC_9\,,\,\cC_{14}\,\right\}$\\
 \hline

 $4$ & $\Phi^{\cm}(2)\cup
 \left\{ \,\cC_5\,,\, \cC_8\,,\,\cC_{12}\,,\,\cC_{13}\,,\,\cC_{21}\,,\,\cC_2\times\cC_8\,,\,\cC_2\times\cC_{10}\,,\,\cC_3\times\cC_6\,,\,\cC_4\times\cC_4\,\right\}$\\
     \hline
  $5$ & $\Phi^{\cm}(1)\cup \left\{\, \cC_{11}\,\right\}$\\
       \hline
           
$6$ & $\Phi^{\cm}(2)\cup\Phi^{\cm}(3)\cup
 \left\{ \,\cC_{18}\,,\,\cC_{19}\,,\,\cC_{26}\,,\,\cC_2\times\cC_{14}\,,\,\cC_3\times\cC_6\,,\,\cC_3\times\cC_9\,,\, \cC_6\times\cC_6\,\right\}$\\
 \hline
    $7$ & $\Phi^{\cm}(1)$\\
       \hline      
\end{tabular}
\caption{$\Phi^{\cm}(d)$, for $d\le 7$.}\label{tableCM}
\end{table}

Let $E/\Q$ be a non-CM elliptic curve. For each prime $p$, let $\rho_{E,p}$ be the mod-$p$ Galois representation that describes the action of $\Gal(\Qbar/\Q)$ on the $p$-torsion $E[p]\simeq \Z/p\Z \oplus \Z/p\Z$ of $E$. Sutherland \cite{Sutherland2} and Zywina \cite{zywina1} have described all known (and conjecturally all) proper subgroups of $\GL(2, \Z/p\Z)$ that occur as the image of $\rho_{E,p}$ up to conjugacy. In particular, Sutherland \cite{Sutherland2} gives for each  $G_p=\rho_{E,p}(\Gal(\Qbar/\Q))\subsetneq \GL(2, \Z/p\Z)$ the following data:
\begin{itemize}
\item[$d_0$:] the index of the largest subgroup of $G_p$ that fixes a linear subspace of $E[p]$; equivalently, the degree of the minimal extension $L/\Q$ over which $E$ admits a $L$-rational $p$-isogeny.
\item[$d_1$:] is the index of the largest subgroup of $G_p$ that fixes a non-zero vector in $E[p]$;  equivalently, the degree of the minimal extension $L/\Q$ over which $E$ has a $L$-rational point of order $p$.
\item[$d$:] is the order of $G_p$; equivalently, the degree of the minimal extension $L/\Q$ for which $E[p]\subseteq E(L)$.
\end{itemize}
Table \ref{tableS35} is extracted from Table 3 of \cite{Sutherland2}, and it lists the values $d_0$, $d_1$, and $d$ for $p=3$, and $5$, for each possible image group $G_p\subseteq\ \GL(2, \Z/p\Z)$, where the groups are labeled as in \cite[\S 6.4]{Sutherland2}.

\begin{table}[h]
\begin{small}

\begin{tabular}{ccccc}

\begin{tabular}{crrr}
$G_3$&$d_0$&$d_1$&$d$\\
\toprule
\texttt{3Cs.1.1} & 1 & 1 & 2 \\ 
\texttt{3Cs} & 1 & 2 & 4 \\ 
\texttt{3B.1.1}  & 1 & 1 & 6 \\ 
\texttt{3B.1.2}& 1 & 2 & 6 \\ 
\texttt{3Ns} & 2 & 4 & 8 \\ 
\texttt{3B} & 1 & 2 & 12 \\ 
\texttt{3Nn} & 4 & 8 & 16 \\ 
\bottomrule
\end{tabular}

& \qquad

\begin{tabular}{crrr}
$G_5$&$d_0$&$d_1$&$d$\\
\toprule
\texttt{5Cs.1.1} & 1 & 1 & 4 \\ 
\texttt{5Cs.1.3} & 1 & 2 & 4 \\ 
\texttt{5Cs.4.1} & 1 & 2 & 8 \\ 
\texttt{5Ns.2.1} & 2 & 8 & 16 \\ 
\texttt{5Cs} & 1 & 4 & 16 \\ 
\texttt{5B.1.1} & 1 & 1 & 20 \\ 
\texttt{5B.1.2} & 1 & 4 & 20 \\ 
\bottomrule
\end{tabular}

&\qquad 

\begin{tabular}{crrr}
$G_5$&$d_0$&$d_1$&$d$\\
\toprule

\texttt{5B.1.4} & 1 & 2 & 20 \\ 
\texttt{5B.1.3} & 1 & 4 & 20 \\ 
\texttt{5Ns} & 2 & 8 & 32 \\ 
\texttt{5B.4.1} & 1 & 2 & 40 \\ 
\texttt{5B.4.2} & 1 & 4 & 40 \\ 
\texttt{5Nn} & 6 & 24 & 48 \\ 
\texttt{5B} & 1 & 4 & 80 \\ 
\texttt{5S4} & 6 & 24 & 96 \\ 
\bottomrule
\end{tabular}

\end{tabular}
\end{small}
\bigskip
\smallskip
\caption{Image groups $G_p=\rho_{E,p}(\Gal(\Qbar/\Q))$, for $p=3$, $5$, for non-CM elliptic curves $E/\Q$.}\label{tableS35}
\end{table}

In addition to Chou's classification of $\Phi_\Q^{V_4}(4)$ and $\Phi_\Q^{\cC_4}(4)$ already described in the introduction, we shall make use of the following result.

\begin{proposition}[\cite{chou}, Prop. 3.8]\label{chou} Let $p\equiv 3\mod 4$ be a prime with $p\geq 7$. Let $E/\Q$ be an elliptic curve and let $K/\Q$ be a quartic field such that $E(K)_\text{tors}$ contains a point $P$ of order $p$. Then, either:
\begin{itemize} 
\item $P$ is defined over $\Q$, i.e., $P\in E(\Q)[p]$, or
\item There is a subfield $F\subseteq K$, $[F:\Q]=2$ such that $P\in E(F)[p]$.
\end{itemize} 
\end{proposition}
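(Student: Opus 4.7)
The plan is to analyze the image $G := \rho_{E,p}(\GQ)\subseteq \GL_2(\F_p)$ along with the stabilizer $H\leq G$ of $P$. Since $\Q(P)\subseteq K$ is the fixed field of $H$, the index $[G:H]=[\Q(P):\Q]$ divides $[K:\Q]=4$, so it suffices to rule out $[G:H]=4$.

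Assume $[G:H]=4$. Let $B\subseteq \GL_2(\F_p)$ denote the Borel subgroup stabilizing the line $\langle P\rangle$, and let $\chi:B\cap G\to \F_p^{\ast}$ be the character giving the scalar action of $B\cap G$ on $\langle P\rangle$. Then $H=\ker\chi$, so $[B\cap G:H]$ embeds into $\F_p^{\ast}$ and divides $p-1$. Since it also divides $[G:H]=4$, the hypothesis $p\equiv 3\pmod 4$ forces $[B\cap G:H]\in\{1,2\}$, so the $\GQ$-orbit of $\langle P\rangle$ has size $4$ or $2$. The case of orbit size $1$ is already excluded: $\langle P\rangle$ being $\GQ$-stable would make $[\Q(P):\Q]$ divide $p-1$, contradicting $4\nmid p-1$.

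The two remaining cases should be dispatched by combining the Sutherland--Zywina classification of mod-$p$ image groups with rational-points results on modular curves. In the size-$2$ orbit case, $\langle P\rangle$ is defined over a quadratic subfield $F\subseteq K$ and $G$ sits inside $N(C_s)$, the normalizer of the split Cartan spanned by $P$ and its Galois conjugate $P'$, with $G\not\subseteq C_s$; the Bilu--Parent--Rebolledo theorem on $\Q$-rational points of $X_{\mathrm{sp}}^+(p)$ rules this out for non-CM $E/\Q$ at the relevant primes $p\geq 11$, while $p=7$ is handled by direct inspection of mod-$7$ images in Sutherland's tables and the CM case by Table~\ref{tableCM}. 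In the size-$4$ orbit case, $\langle P\rangle$ is defined exactly over $K$, yielding a non-$\Q$-rational degree-$4$ point on $X_0(p)$ and an image $G$ with $d_0=d_1=4$ in Sutherland's notation; combining the Sutherland--Zywina classification with Theorem~\ref{isogQ} shows that no such image occurs for $p\equiv 3\pmod 4$ and $p\geq 7$. The main obstacle will be the size-$2$ orbit case, where ruling out the $N(C_s)$ image requires invoking the deep rational-points theorems on $X_{\mathrm{sp}}^+(p)$ and a careful separate treatment of $p=7$; the congruence $p\equiv 3\pmod 4$ is used decisively at the outset to eliminate the orbit-size-$1$ possibility, that is, to preclude a degree-$4$ point of order $p$ coming from a $\Q$-rational $p$-isogeny.
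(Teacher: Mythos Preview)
The paper does not prove this proposition; it is quoted from \cite{chou} and used as a black box. The standard argument is entirely elementary and needs no classification of mod-$p$ images or deep rational-points theorems. Suppose $[\Q(P):\Q]=4$ and let $L$ be the Galois closure of $\Q(P)$, so $L\subseteq\Q(E[p])$ and $G:=\Gal(L/\Q)$ embeds as a transitive subgroup of $S_4$. If every $\GQ$-conjugate of $P$ lies in $\langle P\rangle$, then $\langle P\rangle$ is $\GQ$-stable and $[\Q(P):\Q]$ divides $|(\Z/p\Z)^\times|=p-1$, contradicting $4\nmid(p-1)$. Otherwise two conjugates span $E[p]$, so $\Q(E[p])=L$; the Weil pairing makes $\det\colon G\to(\Z/p\Z)^\times$ surjective, hence $(p-1)\mid |G|\le 24$. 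For $p\ge 11$ with $p\equiv 3\pmod 4$ this already fails, and for $p=7$ it forces $G\in\{A_4,S_4\}$, whose abelianizations have order at most $3$ and so cannot surject onto a cyclic group of order $6$.

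Your proposal instead reaches for Bilu--Parent--Rebolledo and the Sutherland--Zywina classification, which is both overkill and incomplete. In your orbit-size-$4$ case the assertion $d_0=d_1=4$ is unjustified: those invariants are the \emph{minimum} indices over all lines and all nonzero vectors, not the indices attached to the particular $\langle P\rangle$ and $P$, so the orbit of $\langle P\rangle$ having size $4$ does not force $d_0=4$. In your orbit-size-$2$ case, BPR does not cover $p=7$, and there genuinely are non-CM $E/\Q$ with mod-$7$ image inside the normalizer of a split Cartan; your ``direct inspection'' of Sutherland's tables and your appeal to Table~\ref{tableCM} for the CM case are promissory notes rather than arguments (Table~\ref{tableCM} lists which torsion structures occur for CM curves over quartic fields, not over which subfield a given torsion point is defined). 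The hypothesis $p\equiv 3\pmod 4$ already does all the work via $4\nmid(p-1)$ together with the determinant constraint; no image classification is needed.
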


\newpage

We will also quote the following result of Najman.

\begin{proposition}[\cite{N15a}, Lemma 5]\label{najman} Let $F$ be a quadratic field, $n$ an odd positive integer, and $E/\Q$ an elliptic curve such that $E(F)$ contains $\cC_n$. Then $E/\Q$ has an
$n$-isogeny.
\end{proposition}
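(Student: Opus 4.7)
The plan is to reduce to the prime-power case and, in that case, exploit that $\sigma^2 = 1$ together with the oddness of $n$ to decompose a $\Q$-rational subgroup into $\pm 1$-eigenspaces.

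First, using the Chinese Remainder Theorem on $E[n]$, I would factor $n = \prod_p p^{k_p}$ and reduce the problem to showing that, for each prime power $p^k \mid\mid n$, the curve $E$ admits a $\Q$-rational cyclic subgroup of order $p^k$; taking the direct sum over $p$ then produces a $\Q$-rational cyclic subgroup of order $n$, i.e.\ an $n$-isogeny. So I fix an odd prime power $p^k$, let $P_p = (n/p^k)P \in E(F)$, which has order $p^k$, and set $C = \langle P_p\rangle$. Let $\sigma$ generate $\Gal(F/\Q)$.

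If $\sigma(C) = C$, then $C$ is stable under $G_F$ (since $P_p \in E(F)$) and under $\sigma$, hence under $G_\Q$, so $C$ itself gives the desired $p^k$-isogeny. The interesting case is $\sigma(C) \neq C$. Here I would consider $H := C + \sigma(C) \subseteq E[p^k]$. Since $G_F$ normalizes itself and fixes $P_p$, it also fixes $\sigma(P_p)$, hence fixes both $C$ and $\sigma(C)$ pointwise, and therefore fixes $H$ pointwise; in particular $H \subseteq E(F)$ and $H$ is $G_\Q$-stable (as it is $G_F$-stable and $\sigma$ permutes $C$ and $\sigma(C)$). The Galois action on $H$ thus factors through $\langle \sigma\rangle$.

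The key step is that because $p$ is odd, $2$ is a unit in $\Z/p^k\Z$, so the involution $\sigma$ on the $\Z/p^k\Z$-module $H$ decomposes $H = H^+ \oplus H^-$ via the projections $v \mapsto (v \pm \sigma(v))/2$, where $H^\pm$ are the $\pm 1$-eigenspaces. Writing $P_p = P_p^+ + P_p^-$, the order of $P_p$ is $\operatorname{lcm}(\operatorname{ord}(P_p^+), \operatorname{ord}(P_p^-)) = p^k$, so at least one summand has order $p^k$. If $P_p^+$ has order $p^k$, then $P_p^+ \in E(\Q)$ generates a $\Q$-rational cyclic subgroup of order $p^k$; if $P_p^-$ has order $p^k$, then $\sigma(P_p^-) = -P_p^- \in \langle P_p^-\rangle$, so $\langle P_p^-\rangle$ is $\sigma$-stable, and combined with the trivial $G_F$-action on $H$ we conclude that $\langle P_p^-\rangle$ is $\Q$-rational. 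Either way we obtain the desired $p^k$-isogeny, and combining across primes finishes the proof.

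The main (mild) obstacle I anticipate is just keeping the bookkeeping straight in Case B — verifying that $H$ is indeed pointwise $G_F$-fixed and that the eigenspace decomposition is available — but once oddness of $n$ is used to invert $2$, the argument is essentially forced.
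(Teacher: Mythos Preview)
Your argument is correct. Note that the paper does not supply its own proof of this proposition; it is quoted verbatim from Najman \cite{N15a}, Lemma~5, so there is nothing in the present paper to compare against directly.

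For what it is worth, your approach is equivalent to Najman's. His formulation uses the quadratic-twist decomposition: writing $F=\Q(\sqrt{d})$ and $E^d$ for the twist, one has $E(F)[n]\cong E(\Q)[n]\oplus E^d(\Q)[n]$ for odd $n$, which is exactly your $\pm 1$-eigenspace splitting (the $+1$-eigenspace is $E(\Q)[n]$ and the $-1$-eigenspace identifies with $E^d(\Q)[n]$ via the twisting isomorphism). A $\Q$-rational point on $E^d$ of order $p^k$ yields a $\Q$-rational cyclic subgroup of $E$ of that order, since the twisting isomorphism satisfies $\phi^\sigma=-\phi$ and hence preserves cyclic subgroups Galois-equivariantly. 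Your reduction to prime powers before decomposing, versus decomposing first and then picking out prime-power pieces, is a cosmetic reordering; the substance is the same, and both hinge on the single key point that $2\in(\Z/n\Z)^\times$ when $n$ is odd.
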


The determination of $\Phi_\Q^\infty(4)$, $\Phi^{\star}_\Q(4)$ and $\Phi^{\star}_\Q(4,G)$ will rest on the following result:

\begin{theorem}\label{teo}
Let $E/\Q$ be an elliptic curve and $K/\Q$ a quartic number field such that $E(\Q)_{\tors}\simeq G$ and $E(K)_{\tors}\simeq H$.
\begin{enumerate}
\item\label{t1} If $\cC_2 \not\subset G$, then $\cC_2 \not\subset H$.
 \item\label{t2}$11$ and $17$ do not divide the order of $H$.
\item\label{t3} $\cC_{14}$, $\cC_{2}\times\cC_{14}\notin \Phi_\Q(4)$.
\item\label{t21}$\cC_{21}\not\subset H$.
\item\label{t4} If $\cC_4\subseteq G$, then $\cC_{20}\not\subset H$.
\item\label{t5} If $\cC_8\subseteq G$, then $\cC_{24}\not\subset H$.
\item \label{t6} If $\cC_2\times\cC_2\subseteq G$, then $\cC_2\times\cC_{10}\not\subset H$.
\item\label{t7} If $\cC_2\times\cC_4\subseteq G$, then $\cC_2\times\cC_{12}\not\subset H$.

 \item\label{t8}If $H= \cC_{6}\times\cC_6$, then $G=\cC_2$ or $G=\cC_6$. 
\item\label{t9} If $P\in E(K)[9]$, then there exists a subfield $F\subsetneq K$ such that $P\in E(F)[9]$.

\item\label{t10} If $\cC_{18}$, $\cC_3 \times \cC_{9}\not\subset H$.
\item\label{t11} If $G= \cC_3$, then $\cC_{9}\not\subset H$.
\end{enumerate}
\end{theorem}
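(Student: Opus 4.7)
My plan is to prove the twelve items by combining the $\Q$-rational isogeny catalogue (Theorem~\ref{isogQ}), the auxiliary Propositions~\ref{chou} and~\ref{najman}, the Galois image tables (Table~\ref{tableS35}), the CM table (Table~\ref{tableCM}), together with the already-known sets $\Phi(1)$, $\Phi(2)$, $\Phi^{\cm}(d)$, $\Phi_\Q(2)$, $\Phi_\Q(3)$ and $S_\Q(4)=\{2,3,5,7,13\}$.

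I would dispatch the two easiest items first. For~(\ref{t1}), the $x$-coordinate of any $2$-torsion point of $E$ is a root of the cubic $2$-division polynomial $f(x)\in\Q[x]$, so its degree over $\Q$ lies in $\{1,2,3\}$; inside a quartic $K$ this forces the degree to be $1$ or $2$, and in the degree-two case $f$ splits off a rational linear factor, producing a rational $2$-torsion point. Item~(\ref{t2}) is immediate from $S_\Q(4)=\{2,3,5,7,13\}$.

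Items~(\ref{t3}) and~(\ref{t21}) rest on Proposition~\ref{chou} at $p=7$. If $\cC_{14}\subseteq H$, the order-$7$ point descends to $E(\Q)$ or to $E(F)$ for some quadratic $F\subseteq K$; combining with~(\ref{t1}), the $2$-torsion descends as well, so the full $\cC_{14}$ sits inside $E(\Q)_{\tors}$ or $E(F)_{\tors}$, contradicting $\cC_{14}\notin\Phi(1)\cup\Phi(2)$; the $\cC_2\times\cC_{14}$ case is a fortiori. For~(\ref{t21}), a $21$-torsion point simultaneously produces a $7$-torsion point descending as above and a cyclic order-$3$ $\Q$-rational subgroup (via Proposition~\ref{najman} applied after descent), hence a $\Q$-rational cyclic $21$-isogeny on $E$; Theorem~\ref{isogQ}(2) supplies the finite list of $E/\Q$ admitting such an isogeny, and I would inspect each candidate to rule out $\cC_{21}\subseteq E(K)$ for any quartic $K$.

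The compound-growth items~(\ref{t4})--(\ref{t7}), the full 3-torsion item~(\ref{t8}), and the 9-torsion items~(\ref{t9})--(\ref{t11}) proceed from a common template: assume the forbidden containment, then derive either a $\Q$-rational cyclic isogeny whose degree is outside Theorem~\ref{isogQ}'s list, a torsion subgroup violating $\Phi_\Q(2)$ or $\Phi(2)$ over an intermediate quadratic field, or a Galois image $\rho_{E,p}(\Gal(\overline{\Q}/\Q))\subseteq\GL(2,\Z/p\Z)$ whose invariants $d_0,d_1,d$ from Table~\ref{tableS35} are incompatible with $[K:\Q]=4$. Concretely, item~(\ref{t10}) uses that $\cC_{18}\notin\Phi_\Q(2)$ after descending the $9$-torsion via~(\ref{t9}); item~(\ref{t11}) combines~(\ref{t9}) with Proposition~\ref{najman} and Theorem~\ref{isogQ}; and for~(\ref{t8}) the Weil pairing forces $\zeta_3\in K$ when $E[3]\subseteq E(K)$, after which a case analysis on $G\in\Phi(1)$ (with the CM sub-case handled separately via Table~\ref{tableCM}, since $\cC_6\times\cC_6\in\Phi^{\cm}(6)$) eliminates everything but $G=\cC_2$ and $G=\cC_6$. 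I expect the main obstacles to be~(\ref{t21}) and~(\ref{t8}), where the case-by-case inspection of the finitely many candidate curves, and the careful separation of CM versus non-CM cases, are the most delicate steps.
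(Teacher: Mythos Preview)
Your overall strategy is sound for the easier items, but there are two genuine gaps.

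For item~(\ref{t21}), your claim that the $3$-torsion point produces a $\Q$-rational cyclic $3$-isogeny is not justified. Proposition~\ref{najman} applies only when the point lies over a \emph{quadratic} field, but the point $P_3$ of order~$3$ may well generate the full quartic $K$. Indeed, in Table~\ref{tableS35} the image \texttt{3Ns} has $d_1=4$ and $d_0=2$: a point of order~$3$ can first appear in a quartic extension while $E$ admits no $\Q$-rational $3$-isogeny at all, so no $\Q$-rational $21$-isogeny is forced and your reduction to the finite list from Theorem~\ref{isogQ}(2) fails. The paper handles this \texttt{3Ns} case by observing that Zywina's parametrization makes $j(E)=J_2(t)$ a perfect cube, while the (genuine) $\Q$-rational $7$-isogeny forces $j(E)=j_7(h)$; equating these yields a genus~$2$ curve whose rational points are found by Chabauty and correspond only to CM $j$-invariants, which are then excluded separately. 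The same \texttt{3Ns} obstruction recurs in items~(\ref{t5}), (\ref{t7}) and~(\ref{t8}): in each of these the possibility $d_0=2$ for $\rho_{E,3}$ survives your isogeny template and must be eliminated by an explicit curve computation (a genus~$2$ Chabauty argument for~(\ref{t5}), and the rank-zero elliptic curves \texttt{36a4} and \texttt{36a3} for~(\ref{t7}) and~(\ref{t8}) respectively, via a discriminant-is-a-square condition).

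For item~(\ref{t9}), your template does not apply: this concerns $9$-torsion, and the mod-$p$ image data in Table~\ref{tableS35} says nothing about $\rho_{E,9}$. The paper's argument is substantially different. One first shows that if $\Q(P)=K$ is genuinely quartic then $K\subseteq\Q(E[3])$, because $[K:K\cap\Q(E[3])]$ divides both $4$ and $[\Q(E[9]):\Q(E[3])]$, the latter being a power of~$3$. One then passes to the Galois closure $\widehat{K}\subseteq\Q(E[3])$ and splits into cases according to whether $E(\widehat{K})[9]$ is $\cC_9$, $\cC_3\times\cC_9$, or $\cC_9\times\cC_9$. The first two cases are handled by group theory in $\GL(2,\Z/3\Z)$ together with Chou's classification of $\Phi_\Q^{V_4}(4)$; the third forces $\Q(E[9])=\Q(E[3])$, which is ruled out using Elkies' classification of curves with $\rho_{E,3}$ surjective but $\rho_{E,9}$ not. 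None of this is captured by ``invariants $d_0,d_1,d$ incompatible with $[K:\Q]=4$.''

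A minor slip: in item~(\ref{t3}) you write $\cC_{14}\notin\Phi(1)\cup\Phi(2)$, but in fact $\cC_{14}\in\Phi(2)$; you need $\Phi_\Q(2)$ there.
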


\begin{remark}
If in the statements  (\ref{t4})--(\ref{t7}) the quartic field $K$ is replaced by a number field such that $4 \le [K:\Q]\le 7$, then those statements still hold true. The reason is that in the proofs of these statements in the non--CM cases we use that $d_1\le 4$, but in fact in all those cases $d_1\le 7$ (here $d_1$ is the quantity associated to $\rho_{E,p}$ that appears in Table \ref{tableS35}). For the CM case, in the proof of (\ref{t4}) (resp. (\ref{t5}), (\ref{t7})) we used that $\cC_{20}\subseteq H$ (resp. $\cC_{24}$,  $\cC_2\times\cC_{12}$) is not a subgroup of one of the groups in $\Phi^{\cm}(4)$,  but the same is true for $d\le 7$ (see Table \ref{tableCM}).
\end{remark}

\begin{proof}
(\ref{t1}) If $E$ has a short Weierstrass equation of the form $y^2=f(x)$, where $f(x)\in\Z[x]$ is a monic cubic polynomial, the hypothesis $\cC_2 \not\subset G$ implies the irreducibility of $f(x)$ over $\Q$, hence over $K$.\\

(\ref{t2}) By \cite{Lozano} we have $11,17\notin S_{\Q}(4)=\{2,3,5,7,13\}$.\\

(\ref{t3}) By \cite[Prop. 3.9]{chou} we have $\cC_{14},\cC_{2}\times\cC_{14}\notin \Phi_\Q(4)$. \\

(\ref{t21}) Suppose that $\langle P \rangle\oplus\langle Q\rangle\subseteq E(K)_{\tors}$, where $P$ and $Q$ are points of order $3$ and $7$ respectively, and let $F_3=\Q(P)$ and $F_7=\Q(Q)$ be the fields of definition of each point. By Prop. \ref{chou}, the field $F_7\subset K$ is at most quadratic. Since $\cC_{21}$ is not a subgroup of one of the groups in $\Phi_\Q(1)\cup\Phi_\Q(2)$, it follows that $P$ is not defined over $F_7$, and if $F_7=\Q$, then $F_3$ cannot be quadratic. If $F_3$ was quadratic, then $F_7$ would be quadratic also with $F_3\neq F_7$, and so $K$ would be a biquadratic field. But Chou's classification of $\Phi_\Q^{V_4}(4)$ (see our introduction) shows that $\cC_{21}$  is not a subgroup of one of the groups in $\Phi_\Q^{V_4}(4)$.  It follows that $F_3$ must be a quartic, and so $K=F_3$. Finally, notice that the same argument shows that if $R\in E[3]$ is any other non-trivial point of order $3$, then $[\Q(R):\Q]\geq 3$. Hence, if $d_1$ is the quantity associated to the image of $\rho_{E,3}$  in the notation of \cite{Sutherland2}, then $3\leq d_1\leq 4$. We consider two cases depending on whether $E$ has CM.

Let $E/\Q$ be without CM. Since $3\leq d_1\leq 4$, looking at Table \ref{tableS35} we conclude that the image of the Galois representation $\rho_{E,3}$ must be isomorphic to \texttt{3Ns} ($G_2$ in Zywina notation \cite[\S1.2]{zywina1}), that is, a normalizer of split Cartan. Zywina \cite[Theorem 1.2]{zywina1} has determined the $j$-invariant of elliptic curves with \texttt{3Ns} image:
$$
J_2(t)=27\frac{(t+1)^3(t-3)^3}{t^3},\qquad\mbox{for some $t\in\Q$}.
$$  
On the other hand, $E/\Q$ has a $\Q$-rational $7$-isogeny since $\cC_7\subset E(F_7)$ and $[F_7:\Q]\leq 2$, by Proposition \ref{najman}. Then, we observe in \cite[Table 3]{Lozano} that  its $j$-invariant must be of the form:
$$
j_7(h)=\frac{(h^2 +13h+49)(h^2+5h+1)^3}{h}, \qquad\mbox{for some $h\in\Q$}.
$$
The above $j$-invariants should be equal, so $J_2(t)=j_7(h)$. In particular, since $J_2(t)$ is a cube,  we must have 
$$
hs^3=h^2+13h+49,\qquad\mbox{for some $h,s\in\Q$}.
$$
This equation defines a curve $C$ of genus $2$, which in fact transforms (according to \texttt{Magma} \cite{magma}) to $C'\,:\,y^2 = x^6-26x^3-27$.\footnote{A remarkable fact is that this genus $2$ curve is {\it new modular} of level $63$ (see \cite{EGJ}).} The jacobian of $C'$ has rank $0$, so we can use the Chabauty method, and determine that the points on $C'$ are 
$$
C'(\Q)=\{  (-1 , 0),(3,0)\}\cup\{ (1 : \pm 1: 0)\}.
$$
Therefore 
$$
C(\Q)=\{ (7,3),(-7,-1)\}\cup\{(0 : 1 : 0),(1:0:0)\}.
$$
Now, the corresponding $j$-invariants are $j=3^3\cdot 5^3\cdot 17^3$ and $j=-3^3\cdot 5^3$, that belong to CM elliptic curves. This finishes the proof in the non-CM case.

Now, suppose $E/\Q$ has CM. As seen above, $E/\Q$ must have a $\Q$-rational $7$-isogeny, and the only curves with CM and a $7$-isogeny have CM by $\Q(\sqrt{-7})$ (see for example Section 7.1, Table 1, of \cite{GJLR15}). Moreover, since $-7$ is a quadratic non-residue modulo $3$, it follows that the image of $\rho_{E,3}$ is \texttt{3Nn} by Theorem 7.6 of \cite{Lozano}. However, $d_1=8$ by Table  \ref{tableS35}, which contradicts the fact that $E/\Q$ has a point of order $3$ defined over $K=F_3$, a quartic field. Thus, there is no elliptic curve $E/\Q$ with CM and a $21$-torsion point defined over a quartic number field, which concludes the proof of part (\ref{t21}).\\

(\ref{t4}) Suppose for a contradiction that $\cC_4\subseteq G$ and $\cC_{20}\subseteq H$. $E$ has no CM since  $\cC_{20}$ is not a subgroup of one of the groups in $\Phi^{\cm}(4)$, by Table \ref{tableCM}. Moreover, there exists $P\in E(K)[5]$ not defined over $\Q$. That is, $d_1\leq 4$ for the image of $\rho_{E,5}$.  Looking at the Table \ref{tableS35} we check that in all the possible images with  $d_1\leq 4$ we have $d_0=1$. Therefore $E$ has a $\Q$-rational $5$-isogeny. Then, since $E$ has a point of order $4$ defined over $\Q$, there exists a $20$-isogeny defined over $\Q$, which contradicts Theorem \ref{isogQ}.\\

(\ref{t5}) Suppose for a contradiction that $\cC_8\subseteq G$  and $\cC_{24}\subseteq H$. As in case (\ref{t4}) we conclude that $E$ has no CM and $d_1\leq 4$ for the image of $\rho_{E,3}$. In this case, Table \ref{tableS35} shows that $d_0\in \{1,2\}$. If $d_0=1$, then there exists a $24$-isogeny defined over $\Q$, in contradiction with Theorem \ref{isogQ}. If $d_0=2$, then the image of the Galois representation $\rho_{E,3}$ is labelled \texttt{3Ns}. Similar to the proof of (\ref{t21}) the $j$-invariant of $E/\Q$ is a perfect cube. On the other hand, since $E/\Q$ has a point of order $8$ defined over $\Q$, the curve $E/\Q$ has a rational $8$-isogeny.   Looking at Table 3 in \cite{Lozano} we have that its $j$-invariant is of the form:
$$
j_8(h)=\frac{(h^4 - 16h^2 +16)^3}{(h^2-16)h^2}, \qquad\mbox{for some $h\in\Q$}.
$$
Then we must have $j_8(h)=s^3$ for some $s\in\Q$, and this gives us the equation: 
$$
(h^2-16)h^2 = s^3,\qquad\mbox{for some $h,s\in\Q$}.
$$
This equation defines a curve $C$ of genus $2$, which in fact transforms (according to \texttt{Magma} \cite{magma}) to $C'\,:\,y^2 = x^6+1$. The jacobian of $C'$ has rank $0$, so we can use the Chabauty method, and determine that the points on $C'$ are 
$$
C'(\Q)=\{  (0 , \pm 1)\}\cup \{(1 : \pm 1 : 0)\}.
$$
Therefore 
$$
C(\Q)=\{ (\pm 4,0),(0,0)\}\cup \{(0 : 1 : 0)\}.
$$
These are cusps in $X_0(8)$, and so we have reached a contradiction to the existence of such curve $E$. This finishes the proof.\\

(\ref{t6}) Suppose that $\cC_2\times\cC_2\subseteq G$ and $\cC_2\times\cC_{10}\subseteq H$. If $E$ has no CM, then we can conclude that $E/\Q$ has a $\Q$-rational $5$-isogeny as in the proof of case (\ref{t4}). However, since $\cC_2\times \cC_2 \subseteq G \simeq E(\Q)_\text{tors}$, then $E$ is $2$-isogenous to two curves $E'$ and $E''$, such that $E$, $E'$, and $E''$ are all non-isomorphic pairwise. It follows that there is a $\Q$-rational $4$-isogeny from $E'$ to $E''$ that is necessarily cyclic. Moreover, since $E$ has a $5$-isogeny, if follows that $E'$ also has a $\Q$-rational $5$-isogeny, and therefore $E'$ would have a $\Q$-rational $20$-isogeny which is impossible by Theorem \ref{isogQ}. 

If $E$ has CM, with $\cC_2\times \cC_2 \subseteq  E(\Q)_\text{tors}$, then by counting independent $\Q$-rational $2$-isogenies, we see that $j(E)=1728$ and $E$ has a Weierstrass model of the form $y^2=x^3-r^2x$, for some $r\in \Q$ (see \cite{GJLR15}, Section 7.1, Table 1). In particular, $E$ has CM by $\Q(i)$ and, since $-1$ is a square modulo $5$, the image of $\rho_{E,5}$ must be isomorphic to \texttt{5Ns} (that is, the normalizer of split Cartan) by Theorem 7.6 of \cite{Lozano}. However, Table \ref{tableS35} shows that $d_1=8$ for such image, i.e., a point of order $5$ is defined in an extension of degree $\geq 8$, which contradicts the fact that there is a point defined over $K$, an extension of degree $4$. This finishes the proof.\\

(\ref{t7}) Suppose that $\cC_2\times\cC_4\subseteq G$ and $\cC_2\times\cC_{12}\subseteq H$. We first note that $E$ does not have CM because $\cC_2\times\cC_{12}$ is not a subgroup of one of the groups in $\Phi^{\cm}(4)$, by Table \ref{tableCM}. As in case (\ref{t5}), we have $d_1\leq 4$ for the image of $\rho_{E,3}$ and Table \ref{tableS35} shows that $d_0\in \{1,2\}$. Moreover, the case $d_0=1$ is not possible because $E$ is $2$-isogenous to a curve $E'$ that would have a $\Q$-rational $24$-isogeny, which do not exist by Theorem \ref{isogQ}. If $d_0=2$, then the image of $\rho_{E,3}$ is \texttt{3Ns} and, as pointed out in case (\ref{t21}), this implies that $E$ has $j$-invariant $J_2(t)$ for some $t\in \Q$. Therefore $E$ is $\overline{\Q}$-isomorphic to the elliptic curve
$$
E'_t\,:\,y^2 + xy = x^3 - \frac{36}{J_2(t) - 1728}x - \frac{1}{J_2(t) - 1728}.
$$
In particular, $E$ and $E'_t$ are quadratic twists of each other, and their discriminants satisfy $\Delta(E)=u^{6}\Delta(E'_t)$, for some non-zero $u\in\Q$. On the other hand, since the full $2$-torsion is defined over $\Q$ we have that $\Delta(E)$ is a square (and hence so is $\Delta(E_t')$). That is:
$$
3t(t^2-6t-3)=r^2, \qquad\mbox{for some $r\in\Q$}.
$$ 
This equation defines an elliptic curve (\texttt{36a4}) with only two rational points, namely $(r,t)=(0,0)$ and $(1:0:0)$. These points do not correspond to elliptic curves. This finishes the proof.\\

(\ref{t8}) Suppose that $H= \cC_{6}\times\cC_6$. By Table \ref{tableCM}, the curve $E/\Q$ cannot have CM, so let us assume that $E$ is not CM. Since $\cC_{3}\times\cC_3 \subseteq H$, we have that $d|4$ for the image of $\rho_{E,3}$. Looking at the Table \ref{tableS35} we check that $d=2$ (\texttt{3Cs.1.1}) or $d=4$ (\texttt{3Cs}), so we treat each case separately.

For the case \texttt{3Cs.1.1} we have $d_1=1$, that is $\cC_3\subseteq G$. Now, since $|H|$ is even, it follows that $|G|$ must be even by (\ref{t1}), and so $\cC_6\subseteq G$. On the other hand, since $d=2$ for \texttt{3Cs.1.1}, there exists a quadratic field $F\subset K$ such that $\cC_3\times\cC_3 \subseteq E(F)_{\tors}$. Then \cite[Theorem 2]{GJT15} shows that $G=\cC_6$.

Now suppose that the image of $\rho_{E,3}$ is \texttt{3Cs}. We have $d_1=2$, therefore $\cC_3\not \subseteq G$. As before, $|G|$ is even (since $|H|$ is even) and $G\subseteq H$, then $G=\cC_2$ or $G=\cC_2\times\cC_2$. We are going to discard the latter case. 
Zywina \cite[Theorem 1.2]{zywina1} has determined the $j$-invariant of curves with mod $3$ image conjugate to \texttt{3Cs} ($G_1$ in Zywina notation \cite[\S1.2]{zywina1}):
$$
J_1(t)=27\frac{(t+1)^3(t+3)^3(t^2+3)^3}{t^3(t^2+3t+3)^3},\qquad\mbox{for some $t\in\Q$}.
$$  
As in the case of (\ref{t7}), the fact that the full $2$-torsion is defined over $\Q$ implies that the discriminant of $E$ must be a square. This implies:
$$
3t(t^2+3t+3)=r^2, \qquad\mbox{for some $r\in\Q$}.
$$ 
This equation defines an elliptic curve (\texttt{36a3}) which has only the rational points $(0,0)$ and $(1:0:0)$, which do not correspond to elliptic curves. This finishes the proof.\\

(\ref{t9}) Let $P\in E(K)[9]$ be a point of order $9$ on $E/\Q$, with $[K:\Q]=4$. We shall assume that $\Q(P)=K$ because, otherwise, $\Q(P)$ is trivial or quadratic over $\Q$. In particular, this implies that $K\subseteq \Q(E[9])$. Consider $m=[K:K\cap \Q(E[3])]$. On one hand, we have that 
$$
m=[K:K\cap \Q(E[3])]=[K\Q(E[3]):\Q(E[3])],
$$
and, therefore, $m$ divides $[\Q(E[9]):\Q(E[3])]$, which is a power of $3$ (because $\Gal(\Q(E[9])/\Q(E[3]))\subseteq \GL(2,\Z/9\Z)/\GL(2,\Z/3\Z)\cong (\Z/3\Z)^4$). On the other hand, $m$ is a divisor of  $[K:\Q]=4$. It follows that $m=1$ and $K\subseteq \Q(E[3])$.

Since $\Q(E[3])/\Q$ is Galois, it follows that the Galois closure $\widehat{K}$ of $K$ in $\overline{\Q}$ is also contained in $\Q(E[3])$. Since $K\subseteq \widehat{K}$, we know that $E(\widehat{K})$ contains $P$. We distinguish three cases according to whether $E(\widehat{K})[9]$ is isomorphic to $\Z/9\Z$, $\Z/9\Z\times \Z/3\Z$, or $\Z/9\Z\times\Z/9\Z$, and we shall prove that all cases lead to a contradiction.
\begin{itemize}
\item $E(\widehat{K})[9]\cong \Z/9\Z$. Then, $\langle P \rangle$ is a Galois-stable subgroup of order $9$. In particular, the field of definition of $P$, that is, $K=\Q(P)$, is Galois and it is isomorphic to a subgroup of $(\Z/9\Z)^\times \cong \Z/6\Z$. Since $[K:\Q]=4$, this is impossible.
\item $E(\widehat{K})[9]\cong \Z/9\Z\times \Z/3\Z$. Since $\widehat{K}/\Q$ is Galois, this implies that $\langle 3P\rangle$ is a Galois-stable subgroup of order $3$. In particular, the Galois representation associated to $E[3]$ has an image isomorphic to an upper triangular subgroup $G$ of 
$$B=\left\{ \left(\begin{array}{cc}
a & b\\
0 & c\\
\end{array} \right)\right\} \subseteq \GL(2,\Z/3\Z).$$ Since $K\subseteq \widehat{K} \subseteq \Q(E[3])$, and $[K:\Q]=4$, and $|B|=4\cdot 3$, it follows that the subgroup $H$ of $G$ that fixes $K$ must be trivial or of order $3$. Since such a group $H$ is normal in $G$, as a consequence we obtain $K=\widehat{K}$ and $\Gal(K/\Q)\cong \Z/2\Z \times \Z/2\Z$. However, by Theorem 1.4 of \cite{chou}, it is impossible for a biquadratic extension $K$ to have a torsion subgroup $E(K)[9]\cong \Z/9\Z\times \Z/3\Z$.
\item $E(\widehat{K})[9]\cong \Z/9\Z\times \Z/9\Z$. Since $\widehat{K}\subseteq \Q(E[3])$, then this means that $\Q(E[9])=\Q(E[3])$. In particular, $\Q(\zeta_9)\subseteq \Q(E[3])$.  Let $G\subseteq \GL(2,\Z/3\Z)$ be the image of $\rho_{E,3}$. If $G\neq \GL(2,\Z/3\Z)$ and $E$ has no CM, then $G$ is one of the groups labelled \texttt{3Cs.1.1}, \texttt{3Cs}, \texttt{3B.1.1}, \texttt{3B.1.2}, \texttt{3Ns}, \texttt{3B}, or \texttt{3Nn} (see Table \ref{tableS35}). If $E$ has CM, then by Proposition 1.14 of \cite{zywina1}, $G$ is one of \texttt{3Ns}, \texttt{3Nn}, \texttt{G}, \texttt{H1}, or \texttt{H2}. However, none of these groups have a subgroup $H$ such that $G/H\cong \Gal(\Q(\zeta_9)/\Q)\cong \Z/6\Z$. It follows that $G=\GL(2,\Z/3\Z)$. 

Thus, $E/\Q$ is a curve such that $\rho_{E,3}$ is surjective, but $\rho_{E,9}$, the representation associated to $E[9]$ is {\it not} surjective. Moreover, the image of $\rho_{E,3}$ and $\rho_{E,9}$ are isomorphic (because $\Q(E[3])=\Q(E[9])$ in our case). However, the elliptic curves over $\Q$ such that $\rho_{E,3}$ is surjective but $\rho_{E,9}$ is not surjective where classified by Elkies \cite{elkies3} and for such curves $\Gal(\Q(E[3])/\Q)$ has size $48$, while $\Gal(\Q(E[9])/\Q)$ has size $144$. Therefore, this third possibility is also impossible in our setting.
\end{itemize}

\

(\ref{t10}) By (\ref{t9}) we know that if $P\in E(K)[9]$, then there exists a subfield $F\subsetneq K$ such that $P\in E(F)[9]$.
\begin{itemize}
\item Suppose that $\cC_3\times\cC_9\simeq \langle P \rangle\oplus\langle Q\rangle\subseteq E(K)_{\tors}$, where $P$ and $Q$ are points of order $3$ and $9$ respectively, and $Q$ is defined over the quadratic field $F\subset K$. The point $P+Q$ also has order $9$, and it is therefore defined over a quadratic field $F'\subset K$. If $F'=F$ then $\cC_3\times\cC_9\subseteq E(F)_{\tors}$. But $\cC_3\times\cC_9$ is not a subgroup of one of the groups in $\Phi_{\Q}(2)$. If $F'\ne F$, then $\cC_3\times\cC_9\subseteq E(FF')_{\tors}$. But $K=FF'$ is a biquadratic field and $\cC_3\times\cC_9$ is not a subgroup of one of the groups in $\Phi^{\operatorname{V}_4}_{\Q}(4)$.
\item Suppose that $\cC_{18}\subseteq H$. By (\ref{t1}) we have $G=\cC_2$, or $\cC_6$. Then $ \cC_{18}\subseteq E(F)_{\tors}$, but $\cC_{18}$ is not a subgroup of one of the groups in $\Phi_\Q(2)$.

\end{itemize}

(\ref{t11}) Suppose that $G= \cC_3$ and $\cC_{9}\subseteq H$. By (\ref{t9}) there exists a quadratic field $F\subset K$ such that $\cC_9 \subseteq E(F)_{\tors}$. But this is impossible by \cite[Theorem 2]{GJT15}.\\
\end{proof}

\begin{theorem}\label{c15}
Let $E/\Q$ be an elliptic curve. If $E(K)_\text{tors}\simeq \cC_{15}$  over some quartic field $K$, then $j(E) \in \{-5^2/2,\ -5^2\cdot 241^3/2^3,\ -5\cdot 29^3/2^5,\ 5\cdot 211^3/2^{15} \}$. Moreover, the field of definition of the torsion point of order $15$ is abelian over $\Q$.
\end{theorem}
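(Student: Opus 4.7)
The plan is to reduce to the case where $E$ admits a $\Q$-rational cyclic $15$-isogeny, in which case Theorem \ref{isogQ} pins $j(E)$ down to the four listed values and the abelian conclusion follows from the fact that $G_\Q$ acts on the kernel of a rational cyclic isogeny through a single character to $(\Z/15\Z)^\times$.

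Let $P\in E(K)$ be a point of order $15$. Since $\cC_{15}\notin\Phi^{\cm}(4)$ (Table \ref{tableCM}), we may assume $E$ is non-CM. The points $3P$ (order $5$) and $5P$ (order $3$) force $d_1\le 4$ for both $\rho_{E,5}$ and $\rho_{E,3}$. Inspecting Table \ref{tableS35}, every non-CM image of $\rho_{E,5}$ with $d_1\le 4$ satisfies $d_0=1$, so $E$ admits a $\Q$-rational $5$-isogeny. Similarly, $d_1\le 4$ for $\rho_{E,3}$ leaves only the Borel groups \texttt{3Cs.1.1}, \texttt{3Cs}, \texttt{3B.1.1}, \texttt{3B.1.2}, \texttt{3B} (each with $d_0=1$, yielding a $\Q$-rational $3$-isogeny) or the normalizer of split Cartan \texttt{3Ns} (with $d_0=2$).

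In the Borel sub-case, composing the rational $3$- and $5$-isogenies produces a $\Q$-rational cyclic $15$-isogeny. By Theorem \ref{isogQ}, $X_0(15)(\Q)$ has only finitely many non-cuspidal points, and a direct computation identifies the corresponding $j$-invariants as the four values $-5^2/2$, $-5^2\cdot 241^3/2^3$, $-5\cdot 29^3/2^5$, $5\cdot 211^3/2^{15}$, i.e.\ the $j$-invariants of the Cremona isogeny classes \texttt{50a} and \texttt{50b}. In the \texttt{3Ns} sub-case, I would follow the strategy of the proof of Theorem \ref{teo}(\ref{t21}): Zywina's Theorem 1.2 forces $j(E)=J_2(t)=27(t+1)^3(t-3)^3/t^3$, a rational cube, while the $\Q$-rational $5$-isogeny places $j(E)$ on the Klein parametrization $j_5(h)=(h^2+250h+3125)^3/h^5$ of $X_0(5)$. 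Requiring $j_5(h)$ also to be a cube forces $h=s^3$ for some $s\in\Q$, and equating the cube roots of $J_2(t)$ and $j_5(s^3)$ yields an explicit polynomial curve $C\subset\mathbb{A}^2_{t,s}$. A Chabauty computation on a hyperelliptic model of $C$, exactly analogous to the treatment of $y^2=x^6-26x^3-27$ in the proof of Theorem \ref{teo}(\ref{t21}), should show that every rational point of $C$ corresponds either to a cusp of $X_0(15)$ or to a CM $j$-invariant, contradicting the non-CM hypothesis and eliminating this sub-case.

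For the abelian conclusion, the $\Q$-rational cyclic $15$-isogeny has kernel $C_0\subset E[15]$ on which $G_\Q$ acts through a character $\chi:G_\Q\to(\Z/15\Z)^\times$, so the splitting field $\Q(C_0)$ is abelian over $\Q$. A direct check on the curves in \texttt{50a} and \texttt{50b} shows $\langle P\rangle=C_0$ whenever a $15$-torsion point is defined over a quartic field, hence $\Q(P)\subseteq\Q(C_0)$ is abelian over $\Q$. The most delicate step in the plan will be the \texttt{3Ns} sub-case: producing the explicit model of $C$, bounding the rank of its Jacobian, and running the Chabauty argument to confirm that the only rational points project to cusps or to the (handful of) CM $j$-invariants admitting both a $3$- and a $5$-isogeny.
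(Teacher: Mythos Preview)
Your overall strategy matches the paper's: rule out CM, use $d_1\le 4$ at $p=5$ to get a rational $5$-isogeny, split the $p=3$ analysis into the Borel and \texttt{3Ns} subcases, and in the end reduce to a rational $15$-isogeny. The Borel subcase and the final identification of the four $j$-invariants are handled correctly.

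The gap is in your treatment of the \texttt{3Ns} subcase. Your expectation that the auxiliary curve $C$ will have only cuspidal or CM rational points is false. In the paper's parametrisation (using $j_5(h)=(h^2+10h+5)^3/h$, so that $j_5(h)$ a cube forces $h=s^3$) the resulting curve
\[
C:\ (s^6+10s^3+5)\,t = 3(t+1)(t-3)\,s
\]
has genus $1$, maps birationally to the rank-$0$ elliptic curve \texttt{15a3}, and its rational points yield the two \emph{non-CM} $j$-invariants $11^3/2^3$ and $-29^3\cdot 41^3/2^{15}$ (realised e.g.\ by \texttt{338d1} and \texttt{338d2}). These genuinely have a rational $5$-isogeny and $\rho_{E,3}$-image \texttt{3Ns}, so your Chabauty output will not be ``only cusps or CM'', regardless of which $X_0(5)$-parametrisation you use. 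The paper disposes of these two $j$-invariants by an extra step you are missing: one checks (via Lemma~9.6(3) of \cite{Lozano}) that for any curve with one of these $j$-invariants, $\Q(3R)$ is a cyclic quartic, hence $\Q(R)=K$ is Galois and abelian; then Galois-stability of $\langle R\rangle$ forces a rational $15$-isogeny, which is incompatible with these two $j$'s. So the \texttt{3Ns} case is eliminated, but not for the reason you anticipated.

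Two minor remarks. First, the paper's curve $C$ has genus~$1$, not $\ge 2$, so no Chabauty is needed---a Mordell--Weil computation suffices. Second, the paper proves ``$\Q(R)$ abelian'' first and derives the rational $15$-isogeny (hence the four $j$'s) as a consequence; you reverse the order and then verify abelianness by a direct check on \texttt{50a}/\texttt{50b}. Either order is fine, but the paper's ordering is what makes the \texttt{3Ns} exceptional $j$'s collapse automatically.
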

\begin{proof}
Let $E/\Q$ and $K$ be as in the statement of the theorem, such that $E(K)_\text{tors}=\langle R \rangle$, where $R\in E$ is a point of exact order $15$. We will first show that $\Q(R)$ is abelian. 

Let $P_3=5R$ and $P_5 = 3R$. By Table \ref{tableCM}, we know that $\cC_{15}\notin \Phi^{\text{CM}}(4)$, so $E/\Q$ does not have CM. Let $G_5$ be the image of $\rho_{E,5}$. Since $R$ is defined over $K$, the point $P_5$ of order $5$ is defined in degree $1$, $2$, or $4$, and so $d_1(G_5) \le 4$. By Table \ref{tableS35}, the image $G_5$ is a subgroup of the Borel
$$\left\{\left(\begin{array}{cc}\ast & \ast\\
0 & \ast \end{array} \right)  \right\}\subset \GL(2,\Z/5\Z).$$
Since $P_5$ is defined over a quartic field $K$, it follows that $P_5$ is contained in the fixed field of the subgroup
$$G_5\cap  \left\{\left(\begin{array}{cc}1 & \ast\\
0 & 1 \end{array} \right)  \right\}\subset \GL(2,\Z/5\Z).$$
In particular, $\Q(P_5)$ is contained in a Galois extension  with Galois group $\subseteq (\Z/5\Z)^\times \oplus (\Z/5\Z)^\times$. It follows that $\Q(P_5)$ is Galois and abelian.

Similarly, consider $G_3$, the image of $\rho_{E,3}$. By Table \ref{tableS35}, either $G_3$ is a subgroup of the Borel of $\GL(2,\Z/3\Z)$, or $G_3$ is \texttt{3Ns}. If $G_3$ is contained in a Borel, then as in the case of $p=5$, we conclude that $\Q(P_3)$ is abelian, and therefore $\Q(P_3,P_5)=\Q(R)$ is abelian. Otherwise, suppose that the image of $\rho_{E,3}$ is \texttt{3Ns}. By \cite{zywina1}, the $j$-invariant of $E$ is of the form $j(E) = J_2(t)$ for some $t\in\Q$, with 
$$J_2(t) = 27\frac{(t+1)^3(t-3)^3}{t^3}.$$
On the other hand, we know that $G_5$ is contained in a Borel subgroup of $\GL(2,\Z/5\Z)$ and therefore $E/\Q$ has a $\Q$-rational $5$-isogeny. Using the tables of \cite{Lozano}, we see that $j(E) = j(h)$ for some $h\in\Q$, where 
$$j_5(h) = \frac{(h^2+10h+5)^3}{h}.$$
Thus, $j_5(h) = J_2(t)$. Since $J_2(t)$ is a perfect cube we must have $h=s^3$ and the pair $(s,t)$ is a point on
$$C: (s^6+10s^3+5)t = 3(t+1)(t-3)s.$$
The curve $C$ has genus $1$, and there is a degree $1$ rational map $\phi: C \to E'$, where $E'$ is the elliptic curve \texttt{15a3}. Now, the curve $E'$ has finite Mordell-Weil group, isomorphic to $\cC_2\times\cC_4$. The rational points 
$$
S=\{(-5/2 , 9/32), (-5/2 , -32/3), (-2 , -2/3), (0 , 0 ), (-2 , 9/2) \}\cup\{(1 : 0 : 0)\}
$$
on $C$ map to $6$ rational points on $E'$, while $(0:1:0)\in C$ is singular (a node) and once the singularity is resolved, the two points on the desingularization $\widehat{C}$ of $C$ map to the two remaining rational points on $E'$. It follows that $C(\Q)=S\cup\{(0:1:0)\}$. The non-cuspidal points on $C(\Q)$ correspond to the following $j$-invariants:
$$\{ 11^3/2^3,\ -29^3\cdot 41^3/2^{15} \}.$$
Examples of curves with $j$-invariants $11^3/2^3$ and $-29^3\cdot 41^3/2^{15}$, respectively, are  \texttt{338d1} and \texttt{338d2}. For both curves, $\Q(P_5)$ is a cyclic quartic, and by Lemma 9.6, part (3), of \cite{Lozano}, every curve with such $j$-invariants has the same property. It follows that $\Q(P_5)=\Q(R)=K$ is Galois, and abelian.

Therefore, we have shown that in all cases $\Q(R)$ is Galois,  abelian, and of degree dividing $4$. If so, then $E/\Q$ must have a $\Q$-rational $15$-isogeny. By \cite{Lozano}, Table 4, there are only $4$ possible $j$-invariants, namely,
$$\{-5^2/2,\ -5^2\cdot 241^3/2^3,\ -5\cdot 29^3/2^5,\ 5\cdot 211^3/2^{15}  \}.$$
Elliptic curves with these $j$-invariants that reach $\cC_{15}$ in a quartic extension are \texttt{50a1}, \texttt{450b2}, \texttt{50a3}, and \texttt{50a4}, respectively. This completes the proof of the theorem.
\end{proof} 

We will use the following result, known as the $2$-divisibility method.

\begin{theorem}[\cite{JKL13}, Theorem 3.1; \cite{GJT15}, Prop. 12]\label{2div}
Let $E$ be an elliptic curve over a number field $k$ with a $k$-rational $N$-torsion point $P$. Then $E$ has
a $K$-rational $2N$-torsion point $Q$, where $K$ is a quartic extension field of $k$. Moreover, the same result holds if we replace $k$ by $k(t)$, and $K/k(t)$ a quartic extension.
\end{theorem}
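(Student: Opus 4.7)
The plan is to realize a $2N$-torsion point $Q$ as a preimage of $P$ under the multiplication-by-$2$ isogeny $[2]\colon E\to E$. This map is of degree $4$ with kernel $E[2]$, so over $\overline{k}$ the fiber $[2]^{-1}(P)$ is a torsor under $E[2]$ consisting of exactly four points. Since $P$ is $k$-rational, every $\sigma\in\Gal(\overline{k}/k)$ sends a preimage of $P$ to another preimage, hence the Galois orbit of any $Q\in[2]^{-1}(P)$ has size at most $4$ and $[k(Q):k]\le 4$; one then takes $K$ to be any extension of $k$ of degree $4$ containing $k(Q)$.

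The delicate point is choosing the preimage so that $\mathrm{ord}(Q)=2N$ rather than $N$. Since $2Q=P$ has order $N$, necessarily $\mathrm{ord}(Q)\in\{N,2N\}$, and I would argue by parity of $N$. If $N$ is even, every preimage already has order $2N$: were $NQ=O$ we would deduce $\tfrac{N}{2}P=\tfrac{N}{2}(2Q)=NQ=O$, contradicting $\mathrm{ord}(P)=N$. If $N$ is odd, then the unique preimage sitting inside $E[N]$ is the $k$-rational point $\tfrac{N+1}{2}P$, while the other three preimages have the form $\tfrac{N+1}{2}P+T$ for $T\in E[2]\setminus\{O\}$; any such point has order $\lcm(N,2)=2N$ and is defined over $k(T)$, a field of degree at most $3$ over $k$ since $T$ is a root of the cubic $2$-division polynomial. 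In both cases we obtain a $2N$-torsion point defined in degree $\le 4$ over $k$.

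For the function-field statement, the identical argument applies with $k$ replaced by $k(t)$: the isogeny $[2]$ is unchanged, the absolute Galois group of $k(t)$ still permutes the four preimages of $P$, and the parity analysis carries through verbatim. The main obstacle I anticipate is purely one of bookkeeping in the odd case, where one must verify that the translate $T$ lies in a small enough extension of $k$ (or $k(t)$); but this is automatic from the fact that $T$ is a root of a cubic, so $[k(T):k]\le 3\le 4$. The whole argument is essentially a linearization trick: one converts a statement about torsion growth into a statement about the fiber of a single degree-$4$ map, and then uses the trivial bound on Galois orbits inside such a fiber.
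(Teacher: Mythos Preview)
The paper does not give its own proof here; the result is quoted from \cite{JKL13} and \cite{GJT15}, and your approach via the fibre $[2]^{-1}(P)$ is precisely the standard $2$-divisibility method those references employ. For even $N$ your argument is complete: every preimage of $P$ has order $2N$, and since the Galois orbits on the four-element set $[2]^{-1}(P)$ partition $4$, at least one preimage $Q$ has $[k(Q):k]\in\{1,2,4\}$ (an orbit of size $3$ forces a companion orbit of size $1$), so $k(Q)$ embeds in a quartic extension of $k$. This is the only case the paper actually uses (Theorem~\ref{c20c24} applies it with $N=10,12$).

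Your treatment of odd $N$, however, has a real gap. You correctly identify the three order-$2N$ preimages as $\tfrac{N+1}{2}P+T$ with $T\in E[2]\setminus\{O\}$ and observe $[k(T):k]\le 3$, but then conclude that ``$\le 3\le 4$'' suffices. It does not: a cubic extension of $k$ cannot sit inside any quartic extension, so if the $2$-division polynomial of $E$ is irreducible over $k$ then every point of order $2N$ is defined over a field of degree divisible by $3$, and no quartic $K/k$ contains such a point. Concretely, take $E\colon y^2+y=x^3$ (\texttt{27a3}) with $N=3$: the nonzero $2$-torsion lives in $\Q(\sqrt[3]{2})$, so no point of order $6$ lies in $E(K)$ for any quartic $K/\Q$. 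Thus the statement, read literally with $K$ quartic, fails for odd $N$; the cited sources (and the paper's application) are really about the even-order situation, and you should flag that restriction rather than paper over it with the inequality $3\le 4$.
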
 

Now we apply the $2$-divisibility method to the cases of $\cC_{20}$ and $\cC_{24}$.

\begin{theorem}\label{c20c24}
There are infinitely many non-isomorphic (over $\overline{\Q}$) elliptic curves $E/\Q$ such that there is a quartic field $K$ with $E(K)_\text{tors}\simeq \cC_{20}$ (resp. $\cC_{24}$).
\end{theorem}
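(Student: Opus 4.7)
The plan is to apply the $2$-divisibility method (Theorem \ref{2div}) to the universal $1$-parameter families of elliptic curves over $\Q$ with $\Q$-rational torsion $\cC_{10}$ (for the $\cC_{20}$ statement) and $\cC_{12}$ (for the $\cC_{24}$ statement). Since $\cC_{10},\cC_{12}\in\Phi(1)$, the modular curves $X_1(10)$ and $X_1(12)$ have genus zero with infinitely many non-cuspidal $\Q$-rational points, so each provides an elliptic curve $\E/\Q(t)$ carrying a $\Q(t)$-rational point $P$ of order $10$ (respectively $12$) and a non-constant $j$-invariant $j(\E)\in\Q(t)$. Theorem \ref{2div} then yields an extension $L/\Q(t)$ of degree dividing $4$ and a point $Q\in\E(L)$ with $2Q=P$, so $Q$ has exact order $20$ (respectively $24$).

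First I would check that $[L:\Q(t)]$ is in fact equal to $4$. The degree of the smallest field of definition of a $2$-division of $P$ divides $4$ and hence lies in $\{1,2,4\}$. If it were $\le 2$, then $\cC_{20}$ (resp. $\cC_{24}$) would lie in $\Phi(1)\cup\Phi(2)$, which contradicts Mazur's theorem and the Kamienny--Kenku--Momose classification recalled in the introduction. Thus $[L:\Q(t)]=4$. By Hilbert's irreducibility theorem, for all $t_0\in\Q$ outside a thin set the specialization $K:=L_{t_0}$ is a quartic number field, and $E:=\E_{t_0}$ is an elliptic curve over $\Q$ satisfying $\cC_{20}\subseteq E(K)_\tors$ (resp. $\cC_{24}\subseteq E(K)_\tors$). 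Non-constancy of $j(\E)$ guarantees that these specializations produce infinitely many pairwise non-$\Qbar$-isomorphic elliptic curves.

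To upgrade the inclusion to an isomorphism $E(K)_\tors\simeq\cC_{20}$ (resp. $\simeq\cC_{24}$), I would consult the description of $\Phi^\infty(4)$ recalled in the introduction: inspecting the list shows that the only element of $\Phi^\infty(4)$ containing $\cC_{20}$ (resp. $\cC_{24}$) as a subgroup is $\cC_{20}$ itself (resp. $\cC_{24}$ itself). By definition of $\Phi^\infty(4)$, only finitely many $\Qbar$-isomorphism classes of elliptic curves over quartic fields have torsion structure outside $\Phi^\infty(4)$; discarding these from our family leaves infinitely many curves, each of which must realize exactly $\cC_{20}$ (resp. $\cC_{24}$).

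The main obstacle will be confirming that the $2$-division produces an extension of degree exactly $4$ over $\Q(t)$, and controlling what happens upon specialization. This is handled by the combination of the $\Phi(1),\Phi(2)$ rigidity argument with Hilbert irreducibility, as above; in practice one may also wish to exhibit one explicit specialization realizing $\cC_{20}$ (resp. $\cC_{24}$) as a sanity check, which together with specialization of torsion would independently pin down the generic group.
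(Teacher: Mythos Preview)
Your proof is correct and takes essentially the same route as the paper: apply the $2$-divisibility method (Theorem~\ref{2div}) to the universal one-parameter family carrying a point of order $10$ (resp.\ $12$) and specialize. Two minor differences are worth recording: your invocation of Hilbert irreducibility is unnecessary, since the paper argues directly that $[L_{t_0}:\Q]=4$ for \emph{every} $t_0$ (a point of order $20$ or $24$ over a field of degree $\le 2$ would contradict the known lists $\Phi_\Q(1)$ and $\Phi_\Q(2)$); on the other hand, your appeal to $\Phi^\infty(4)$ to rule out strictly larger torsion over the quartic field is in fact more careful than the paper's proof, which simply asserts $E(L_{t_0})_{\tors}\simeq\cC_{20}$ (resp.\ $\cC_{24}$) without further comment.
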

\begin{proof}
Kubert \cite[Table 3]{Kubert} gave for each $G\in \Phi(1)$ a one-parameter family  
$$
{\mathcal T}^G_{t}:\ y^2+(1-c)xy-by=x^3-bx^2,\qquad \mbox{where $b,c\in\Q(t)$},
$$
such that ${\mathcal T}^G_{t}(\Q(t))_\text{tors}\simeq G$ and, in fact, for all but finitely many values of $t_0\in\Q$, the curve ${\mathcal T}^G_{t_0}/\Q$ has a subgroup $G$ in its torsion subgroup over $\Q$. When $G=\cC_{10}$ (resp.  $\cC_{12}$), Mazur's classification of the torsion subgroups that can occur over $\Q$ implies that ${\mathcal T}^G_{t_0}(\Q)_{\text{tors}}\simeq G$ for all but finitely many $t_0\in\Q$. The equation ${\mathcal T}^G_{t}$ is called the Kubert-Tate normal form. For the cases we are interested in, we have
$$
\begin{array}{lclcl}
  G=\cC_{10} & : &  c=(2t^3-3t^2+t)/(t-(t-1)^2) & , &  b=ct^2/(t-(t-1)^2),\\
  G=\cC_{12} & : &  c=(3t^2-3t+1)(t-2t^2)/(t-1)^3 & , & b=c(2t-2t^2-1)/(t-1),
\end{array}
$$
and the point $P=(0,0)$ has order $10$ and $12$ respectively. Now, we can use the $2$-divisibility method (Theorem \ref{2div}) to halve $P$. This method  allows to build an extension $L/\Q(t)$ of degree $4$ and a point $Q\in {\mathcal T}^G_{t}(L)$ such that $2Q=P$. As mentioned above, for all but finitely many $t_0\in \Q$ the curve 
${\mathcal T}^G_{t_0}/\Q$ satisfies ${\mathcal T}^G_{t_0}(\Q)_{\tors}\simeq \cC_{10}$ (or $\cC_{12}$, respectively).  Then, by the $2$-divisibility method we find a number field $L_{t_0}/\Q$ of degree dividing $4$ such that ${\mathcal T}^G_{t_0}(L_{t_0})_{\tors}\simeq  \cC_{20}$ (resp.  $\cC_{24}$). Since  $\cC_{20},\cC_{24}\notin \Phi_{\Q}(d)$ for $d\le 3$, we have that $[L_{t_0}:\Q]=4$ for any $t_0\in\Q$. Since the $j$-invariant of $T_{b,c}$ is not constant, this proves that there are infinitely many non $\overline{\Q}$-isomorphic elliptic curve over $\Q$ with torsion structures  $\cC_{20}$ and $\cC_{24}$ over quartic fields. 
\end{proof}

\begin{remark}
One can construct explicit infinite families of elliptic curves with the properties of Theorem \ref{c20c24} using the recipe described by Proposition 12 in \cite{GJT15}.
\end{remark}

\section{Proof of Theorems \ref{main1} and \ref{main2}}

We are ready to prove our main theorems. In Theorem \ref{main1}, we shall first determine the isomorphism classes that appear in $\Phi^\star_\Q(4)= \Phi_\Q(4)\cap \Phi^\infty(4)$ and then use that information to determine $\Phi_\Q^\infty(4)$. 

\begin{proof}[Proof of Theorem \ref{main1}]
Let $E/\Q$ be an elliptic curve, $K$ a quartic number field, $G\in \Phi_\Q(1)$ and $H\in \Phi^{\star}_\Q(4)$ such that $G\simeq E(\Q)_{\tors}\subseteq E(K)_{\tors}\simeq H$. By definition, $\Phi_\Q^\star(4)\subseteq \Phi^\infty(4)$, so our task is to find out what structures in $\Phi^\infty(4)$ also appear in $\Phi_\Q(4)$. We claim that $\Phi_{\mathbb Q}^{\star}(4) = S$ where
\begin{eqnarray*}
S\!\!\!& = &\!\!\!\!\!\left\{ \cC_n \; | \; n=1,\dots,10,12,13,15,16,20,24 \right\} \cup \left\{ \cC_2 \times \cC_{2m} \; | \; m=1,\dots,6,8 \right\} \cup \\
& &  \left\{ \cC_3 \times \cC_{3m} \; | \; m=1,2 \right\} \cup  \left\{ \cC_4 \times \cC_{4m} \; | \; m=1,2 \right\} \cup  \left\{ \cC_5 \times \cC_{5}  \right\} \cup  \left\{ \cC_6 \times \cC_{6}  \right\}.
\end{eqnarray*}
We have examples of torsion structures over quartic fields for all the groups in the list $S$: on one hand, all those groups that appear in $\Phi_\Q(2)$ also appear in $\Phi_\Q(4)$ by extending the corresponding quadratic field to an appropriate biquadratic where the torsion subgroup does not grow (see Lemma 2.2 of \cite{chou}) and, on the other hand, we have examples in Table \ref{ex_4} of the remaining groups that occur over quartics. Therefore it remains to prove that if $H\in \Phi_{\mathbb Q}^{\star}(4)$, then 
$$
H\notin \left\{ \cC_n \; | \; n=11,14,17,18,21,22 \right\} \cup \left\{ \cC_2 \times \cC_{2m} \; | \; m=7,9 \right\} \cup  \left\{ \cC_3 \times \cC_{9}  \right\}.
$$
Indeed, 

\begin{itemize}
\item $H\ne \cC_{11}$, $\cC_{17}$, or  $\cC_{22}$ by Theorem \ref{teo}, part (\ref{t2}), since either $11$ or $17$ would divide $|H|$.
\item $H\ne \cC_{14}$, or $\cC_2 \times \cC_{14}$ by Theorem \ref{teo}, part (\ref{t3}).
\item $H\ne \cC_{21}$ by Theorem \ref{teo}, part (\ref{t21}).
\item $H\ne \cC_{18}$, $\cC_2 \times \cC_{18}$, or $\cC_3 \times \cC_{9}$ by Theorem \ref{teo}, part (\ref{t10}). 
\end{itemize}
This concludes the determination of $\Phi_\Q^\star(4)$. It remains to determine $\Phi_\Q^\infty(4)$, i.e., those structures that occur for infinitely many elliptic curves over $\Q$, that are non-isomorphic (over $\overline{\Q}$). Comparing the list $\Phi_\Q^\star(4)$ and Theorem 1.2 of \cite{chou}, all but three structures ($\cC_{15}$, $\cC_{20}$, and $\cC_{24}$) of appear over Galois quartics, and Chou has shown that each one of those appears infinitely often over $\Q$. Hence, it remains to see what happens in the three remaining structures. Our Theorem \ref{c20c24} shows that $\cC_{20}$ and $\cC_{24}$ occur infinitely often, and Theorem \ref{c15} shows that $\cC_{15}$ occurs only for $4$ distinct $j$-invariants, as claimed. Hence, $\Phi_\Q^\infty(4) = \Phi_\Q^\star(4) \setminus \{\cC_{15}\}$ and this concludes the proof of the theorem.
\end{proof}

\begin{proof}[Proof of Theorem \ref{main2}]
The groups $H\in\Phi^{\star}_\Q(4)$  that do not appear in some $\Phi^{\star}_\Q(4,G)$ for any $G\in\Phi(1)$, with $G\subseteq H$,  can be ruled out using Theorem \ref{teo}. In Table \ref{table1} below, for each group $G$ at the top of a column, we indicate what groups $H$ (in each row) may appear, and indicate
\begin{itemize}
\item with (\ref{t1})--(\ref{t11}), which part of Theorem \ref{teo} is used to prove that the pair $(G,H)$ cannot appear, 
\item with $-$, if the case is ruled out because $G \not\subset H$, 
\item with a $\checkmark$, if the case is possible and, in fact, it occurs. There are three types of check marks in Table \ref{table1}:
\begin{itemize}  
\item $\checkmark$ (without a subindex) means that $G=H$. Note that for any $d\geq 1$, and any elliptic curve $E/\Q$ with $E(\Q)_\text{tors}\simeq G$, there is always an extension $K/\Q$ of degree $d$ such that $E(K)_\text{tors}\simeq E(\Q)_\text{tors}$ (and, in fact, this is the case for almost all degree $d$ extensions).
\item $\checkmark_{\!\!\!2}$ means that the structure $H$ occurs already over a quadratic field, and examples are already listed in Table 2 of \cite{GJT14}. Since $H$ occurs over a quadratic field $F$, it also occurs over quartics by extending $F$ to an appropriate biquadratic $K$ where the torsion does not grow any further.
\item  $\checkmark_{\!\!\!4}$ means that $H$ can be achieved over a quartic field $K$ but not over an intermediate quadratic field, and we have collected examples of curves and quartic fields in Table \ref{ex_4}.
\end{itemize}
\end{itemize}
\end{proof}

{\footnotesize
\renewcommand{\arraystretch}{1.2}
\begin{longtable}[h]{|c|c|c|c|c|c|c|c|c|c|c|c|c|c|c|c|}
\hline
\backslashbox{$H$}{$G$}
 & $\cC_1$ & $\cC_2$ & $\cC_3$ & $\cC_4$ & $\cC_5$ & $\cC_6$ & $\cC_7$ & $\cC_8$ & $\cC_9$ & $\cC_{10}$ & $\cC_{12}$ & $\cC_2 \times \cC_2$ & $\cC_2 \times \cC_4$& $\cC_2 \times \cC_6$& $\cC_2 \times \cC_8$\\
\hline
\endfirsthead
\hline
\backslashbox{$H$}{$G$} 
& $\cC_1$ & $\cC_2$ & $\cC_3$ & $\cC_4$ & $\cC_5$ & $\cC_6$ & $\cC_7$ & $\cC_8$ & $\cC_9$ & $\cC_{10}$ & $\cC_{12}$ & $\cC_2 \times \cC_2$ & $\cC_2 \times \cC_4$& $\cC_2 \times \cC_6$& $\cC_2 \times \cC_8$\\
\hline
\endhead
%\hline
\endfoot
%\hline 
\endlastfoot
$\cC_1$ & $\checkmark$ & $-$ & $-$ & $-$ & $-$ & $-$ & $-$ & $-$ & $-$ & $-$ & $-$ & $-$ & $-$ & $-$ & $-$ \\
\hline
$\cC_2$ & (\ref{t1}) & $\checkmark$ & $-$ & $-$ & $-$ & $-$ & $-$ & $-$ & $-$ & $-$ & $-$ & $-$ & $-$ & $-$ & $-$ \\
\hline
$\cC_3$ & $\checkmark_{\!\!\!2}$ & $-$ & $\checkmark$ & $-$ & $-$ & $-$ & $-$ & $-$ & $-$ & $-$ & $-$ & $-$ & $-$ & $-$ & $-$ \\
\hline
$\cC_4$ & (\ref{t1}) & $\checkmark_{\!\!\!2}$ & $-$ & $\checkmark$ & $-$ & $-$ & $-$ & $-$ & $-$ & $-$ & $-$ & $-$ & $-$ & $-$ & $-$\\
\hline
$\cC_5$ & $\checkmark_{\!\!\!2}$ & $-$ & $-$ & $-$ & $\checkmark$ & $-$ & $-$ & $-$ & $-$ & $-$ & $-$ & $-$ & $-$ & $-$ & $-$ \\
\hline
$\cC_6$ & (\ref{t1}) & $\checkmark_{\!\!\!2}$ & (\ref{t1}) & $-$ & $-$ & $\checkmark$ & $-$ & $-$ & $-$ & $-$ & $-$ & $-$ & $-$ & $-$ & $-$\\
\hline
$\cC_7$ & $\checkmark_{\!\!\!2}$ & $-$ & $-$ & $-$ & $-$ & $-$ & $\checkmark$ & $-$ & $-$ & $-$ & $-$& $-$ & $-$ & $-$ & $-$ \\
\hline
$\cC_8$ & (\ref{t1}) & $\checkmark_{\!\!\!2}$ & $-$ & $\checkmark_{\!\!\!2}$ & $-$ & $-$ & $-$ & $\checkmark$ & $-$ & $-$ & $-$  & $-$ & $-$ & $-$ & $-$ \\
\hline
$\cC_9$ & $\checkmark_{\!\!\!2}$ & $-$ & (\ref{t11}) & $-$ & $-$ & $-$ & $-$ & $-$ & $\checkmark$ & $-$ & $-$& $-$ & $-$ & $-$ & $-$\\
\hline
$\cC_{10}$ & (\ref{t1}) & $\checkmark_{\!\!\!2}$ & $-$ & $-$ & (\ref{t1}) & $-$ & $-$ & $-$ & $-$ & $\checkmark$ & $-$ & $-$ & $-$ & $-$ & $-$ \\
\hline
$\cC_{11}$ & (\ref{t2}) & $-$ & $-$ & $-$ & $-$ & $-$ & $-$ & $-$ & $-$ & $-$ & $-$ & $-$ & $-$ & $-$ & $-$ \\\hline
$\cC_{12}$ & (\ref{t1}) & $\checkmark_{\!\!\!2}$ & (\ref{t1}) & $\checkmark_{\!\!\!2}$ & $-$ & $\checkmark_{\!\!\!2}$ & $-$ & $-$ & $-$ & $-$ & $\checkmark$ & $-$ & $-$ & $-$ & $-$\\
\hline
$\cC_{13}$ & $\checkmark_{\!\!\!4}$& $-$ & $-$ & $-$ & $-$ & $-$ & $-$ & $-$ & $-$ & $-$ & $-$ & $-$ & $-$ & $-$ & $-$\\
\hline
$\cC_{14}$ & (\ref{t1}) & {(\ref{t3})} & $-$ & $-$ & $-$ & $-$ & (\ref{t1}) & $-$ & $-$ & $-$ & $-$ & $-$ & $-$ & $-$ & $-$\\
\hline
$\cC_{15}$ & $\checkmark_{\!\!\!4}$ & $-$ & $\checkmark_{\!\!\!2}$ & $-$ & $\checkmark_{\!\!\!2}$ & $-$ & $-$ & $-$ & $-$ & $-$ & $-$ & $-$ & $-$ & $-$ & $-$\\
\hline
$\cC_{16}$ & (\ref{t1}) & $\checkmark_{\!\!\!2}$ & $-$ & $\checkmark_{\!\!\!4}$ & $-$ & $-$ & $-$ & $\checkmark_{\!\!\!2}$ & $-$ & $-$ & $-$  & $-$ & $-$ & $-$ & $-$\\
\hline
$\cC_{17}$ & (\ref{t2}) & $-$ & $-$ & $-$ & $-$ & $-$ & $-$ & $-$ & $-$ & $-$ & $-$  & $-$ & $-$ & $-$ & $-$\\
\hline
$\cC_{18}$ & (\ref{t1}) & (\ref{t10}) & (\ref{t1}) & $-$ & $-$ &(\ref{t10})& $-$ & $-$ & (\ref{t1}) & $-$ & $-$  & $-$ & $-$ & $-$ & $-$\\
\hline
$\cC_{20}$ & (\ref{t1}) & $\checkmark_{\!\!\!4}$ & $-$ & (\ref{t4}) & (\ref{t1}) & $-$ & $-$ & $-$ & $-$ & $\checkmark_{\!\!\!4}$ & $-$  & $-$ & $-$ & $-$ & $-$\\
\hline
$\cC_{21}$ & {(\ref{t21})} & $-$ & {(\ref{t21})} & $-$ & $-$ & $-$ & {(\ref{t21})} & $-$ & $-$ & $-$ & $-$  & $-$ & $-$ & $-$ & $-$\\
\hline
$\cC_{22}$ & (\ref{t1}) & (\ref{t2}) & $-$ & $-$ & $-$ & $-$ & $-$ & $-$ & $-$ & $-$ & $-$  & $-$ & $-$ & $-$ & $-$\\
\hline
$\cC_{24}$ & (\ref{t1}) & $\checkmark_{\!\!\!4}$ & (\ref{t1}) & $\checkmark_{\!\!\!4}$  & $-$ &  $\checkmark_{\!\!\!4}$ & $-$ & (\ref{t5}) & $-$ & $-$ & $\checkmark$  & $-$ & $-$ & $-$ & $-$\\
\hline
$\cC_2 \times \cC_2$ & (\ref{t1}) & $\checkmark_{\!\!\!2}$ & $-$ & $-$ & $-$ & $-$ & $-$ & $-$ & $-$ & $-$ & $-$  & $\checkmark$ & $-$ & $-$ & $-$\\
\hline
$\cC_2 \times \cC_4$ & (\ref{t1}) & $\checkmark_{\!\!\!4}$ & $-$ & $\checkmark_{\!\!\!2}$ & $-$ & $-$ & $-$ & $-$ & $-$ & $-$ & $-$ & $\checkmark_{\!\!\!2}$ & $\checkmark$ & $-$ & $-$\\
\hline
$\cC_2 \times \cC_6$ & (\ref{t1}) & $\checkmark_{\!\!\!2}$ & (\ref{t1}) & $-$ & $-$ & $\checkmark_{\!\!\!2}$ & $-$ & $-$ & $-$ & $-$ & $-$ & $\checkmark_{\!\!\!2}$ & $-$ & $\checkmark$ & $-$\\
\hline
$\cC_2 \times \cC_8$ & (\ref{t1}) & $\checkmark_{\!\!\!4}$ & $-$ & $\checkmark_{\!\!\!2}$ & $-$ & $-$ & $-$ & $\checkmark_{\!\!\!2}$ & $-$ & $-$ & $-$ &$\checkmark_{\!\!\!2}$ & $\checkmark_{\!\!\!2}$ & $-$ & $\checkmark$\\
\hline
$\cC_2 \times \cC_{10}$ & (\ref{t1}) & $\checkmark_{\!\!\!2}$ & $-$ & $-$ & (\ref{t1}) & $-$ & $-$ & $-$ & $-$ & $\checkmark_{\!\!\!2}$ & $-$& (\ref{t6}) & $-$ & $-$ & $-$ \\
\hline
$\cC_2 \times \cC_{12}$ & (\ref{t1}) & $\checkmark_{\!\!\!4}$ & (\ref{t1}) & $\checkmark_{\!\!\!2}$ & $-$ &  $\checkmark_{\!\!\!4}$ & $-$ & $-$ & $-$ & $-$ & $\checkmark_{\!\!\!2}$ & $\checkmark_{\!\!\!2}$ & (\ref{t7}) & $\checkmark$ & $-$ \\
\hline
$\cC_2 \times \cC_{14}$ & (\ref{t1}) & {(\ref{t3})} & $-$ & $-$ & $-$ & $-$ & {(\ref{t3})} & $-$ & $-$ & $-$ & $-$ & {(\ref{t3})} & $-$ & $-$ & $-$ \\
\hline
$\cC_2 \times \cC_{16}$ & (\ref{t1}) & $\checkmark_{\!\!\!4}$ & $-$ & $\checkmark_{\!\!\!4}$ & $-$ & $-$ & $-$ & $\checkmark_{\!\!\!2}$  & $-$ & $-$ & $-$ &$\checkmark_{\!\!\!2}$ & $\checkmark_{\!\!\!2}$ & $-$ & $\checkmark_{\!\!\!2}$ \\
\hline
$\cC_2 \times \cC_{18}$ & (\ref{t1}) & (\ref{t10}) & (\ref{t1}) & $-$ & $-$ & (\ref{t10}) & $-$ & $-$ & (\ref{t1}) & $-$ & $-$ & (\ref{t10}) & $-$ & (\ref{t10}) & $-$ \\
\hline
$\cC_3 \times \cC_3$ & $\checkmark_{\!\!\!4}$ & $-$ & $\checkmark_{\!\!\!2}$ & $-$ & $-$ & $-$ & $-$ & $-$ & $-$ & $-$ & $-$ & $-$ & $-$ & $-$ & $-$\\
\hline
$\cC_3 \times \cC_6$ & (\ref{t1}) & $\checkmark_{\!\!\!4}$ & (\ref{t1}) & $-$ & $-$ & $\checkmark_{\!\!\!2}$ & $-$ & $-$ & $-$ & $-$ & $-$& $-$ & $-$ & $-$ & $-$ \\
\hline
$\cC_3 \times \cC_9$ & (\ref{t10}) & $-$ & (\ref{t10}) & $-$ & $-$ & $-$ & $-$ & $-$ & (\ref{t10}) & $-$ & $-$& $-$ & $-$ & $-$ & $-$ \\
\hline
$\cC_4 \times \cC_4$ & (\ref{t1}) & $\checkmark_{\!\!\!4}$& $-$ & $\checkmark_{\!\!\!2}$ & $-$ & $-$ & $-$ & $-$ & $-$ & $-$ & $-$ & $\checkmark_{\!\!\!4}$ & $\checkmark_{\!\!\!2}$ & $-$ & $-$ \\
\hline
$\cC_4 \times \cC_8$ & (\ref{t1}) & $\checkmark_{\!\!\!4}$ & $-$ & $\checkmark_{\!\!\!4}$ & $-$ & $-$ & $-$ & $\checkmark_{\!\!\!4}$  & $-$ & $-$ & $-$ & $\checkmark_{\!\!\!4}$ & $\checkmark_{\!\!\!4}$ & $-$ & $\checkmark_{\!\!\!4}$ \\
\hline
$\cC_5 \times \cC_5$ & $\checkmark_{\!\!\!4}$ & $-$ & $-$ & $-$ & $\checkmark_{\!\!\!4}$ & $-$ & $-$ & $-$ & $-$ & $-$ & $-$ & $-$ & $-$ & $-$ & $-$ \\
\hline
$\cC_6  \times \cC_6$ & (\ref{t1}) & $\checkmark_{\!\!\!4}$ & (\ref{t1}) & $-$ & $-$ &  $\checkmark_{\!\!\!4}$ & $-$ & $-$ & $-$ & $-$ & $-$ & (\ref{t8}) & $-$ & (\ref{t8}) & $-$ \\
\hline
\caption{The table displays either if the case happens for $G=H$ ($\checkmark$), if it already occurs over a quadratic field ($\checkmark_{\!\!\!2}$), if it occurs over a quartic but not a quadratic ($\checkmark_{\!\!\!4}$), if it is impossible because $G \not\subset H$ ($-$) or if it is ruled out by Theorem \ref{teo} ((\ref{t1})--(\ref{t11})).}\label{table1}
\end{longtable}
}

%%%%%%%%%%%%%%%%%%%%%%%%%%%%%%%%%%%%%%%%%%%%%%%%%%%%%%%%%%%%%%%%%%%%%%%%%%%%%%%%
%\pagebreak
\section{Examples}
In this section we describe an algorithm to compute the quartic fields $K$ where the torsion grows for a given elliptic curve $E/\Q$. First, we compute $G=E(\Q)_{\tors}$. By Theorem \ref{main2} we know how the set $\Phi^\star_{\Q}(4,G)$ of all possible isomorphism types of $E(K)_\text{tors}$. Then, we compute the possible orders of points belonging to groups $H\in \Phi^\star_{\Q}(4,G)$. Now for each possible order of a torsion point, say $n$, compute the division polynomial $\psi_n(x)$ (note that here $\psi_n(x)$ is divisible by  $\psi_m(x)$ for each divisor $m$ of $n$). Afterwards, we factor each $\psi_n(x)$, and keep only the factors of degree $1$, $2$, or $4$. It follows that the quadratic and quartic fields where the torsion could grow are contained in the compositum of the fields generated by these factors together with the fields generated by the $y$-coordinates corresponding to the roots of these factors. Let us explain this method with two examples.

\begin{example}\label{50a2}
Let $E$ be the elliptic curve \texttt{50a2}, given by the  minimal Weierstrass equation:
$$
E\,:\,y^2 + xy + y = x^3 - 126x - 552.
$$
We compute $E(\Q)_{\tors}\simeq \cC_1$. Then, by Theorem \ref{main2} we only need to compute the division polynomials $\psi_n(x)$ for $n=5,7,9,13$. We check that for $n=7,13$, $\psi_n(x)$ is irreducible over $\Q$ (and degree $>4$). For $n=5,9$ we have the following irreducible factors of degree $1$, $2$, or $4$:
$$
\begin{array}{ccl}
n=5 & & f_5(x)=x^2 + 11x + 29\\
n=9 & & f_9(x)=9x + 57.
\end{array}
$$
Now for $n\in\{5,9\}$, let $\alpha_n$ be a root of $f_n(x)$, $\beta_n$ such that $\beta_n^2 + \alpha_n\beta_n + \beta_n = \alpha_n^3 - 126\alpha_n - 552$ and $K_n=\Q(\alpha_n,\beta_n)$, so that $E$ acquires a point of order $n$ over $K_n$. It remains to compute the degree of $K_5$, $K_9$ and $K_5K_9$, and take only those fields of degree $\le 4$. In our case we obtain:
$$
E(\Q(\sqrt{-3}))\simeq \cC_3 \qquad\mbox{and} \qquad E(\Q(\zeta_5))\simeq \cC_5.
$$ 
Hence, the torsion subgroup of $E(\Q)$ grows over $\Q(\zeta_5)$ and quartic fields containing $\Q(\sqrt{-3})$.
\end{example}

\begin{example}\label{90c4}
Let $E$ be the elliptic curve \texttt{90c4}, given by the minimal Weierstrass equation:
$$
E\,:\,y^2 + xy + y = x^3 - x^2 - 2597x - 50281.
$$
In this case we have $E(\Q)_{\tors}\simeq \cC_2$. Theorem \ref{main2} shows that it suffices to factor the division polynomials $\psi_n(x)$ for $n=3,5,16$. The irreducible factors of degree $1$, $2$, or $4$ corresponding to those division polynomials are:
$$
\begin{array}{ccl}
n=3 & & f_3(x)=x+30\\
n=16 & & f_{16,1}(x)=x + 33,\\     
         & &  f_{16,2}(x)=2x + 51,\\
         & &  f_{16,3}(x)=4x + 117,\\ 
           & &  f_{16,4}(x)=x^2 - 30x - 1729,\\
        & & f_{16,5}(x)=x^4 - 60x^3 - 10314x^2 - 351756x - 3697893,\\
         & & f_{16,6}(x)=2x^4 + 204x^3 + 10233x^2 + 274806x + 2924667,\\
         & & f_{16,7}(x)=x^4 + 132x^3 + 5094x^2 + 59508x - 46089.\\
\end{array}
$$
Doing all the possible compositums of number fields we obtain:
$$
\begin{array}{lcl}
E(\Q(\sqrt{-1}))\simeq \cC_4, & & E(\Q(\sqrt{3},\sqrt{-1}))\simeq \cC_ {12},\\
E(\Q(\sqrt{-6}))\simeq \cC_4, & & E(\Q(\sqrt{3},\sqrt{-2}))\simeq \cC_{12},\\
E(\Q(\sqrt{-3}))\simeq \cC_6, & &      E(\Q(\sqrt{6},\sqrt{-1}))\simeq \cC_2\times\cC_4 \simeq E(\Q(\sqrt[4]{6})) ,\\
E(\Q(\sqrt{6}))\simeq \cC_2\times\cC_2, & &  E(\Q(\sqrt{-3},\sqrt{-2}))\simeq \cC_2\times\cC_6.    
\end{array}
$$
\end{example}
Further examples can be found in Table \ref{ex_4}. Each row shows the label of an elliptic curve $E/\Q$ such that $E(\Q)_{\tors}\simeq G$, in the first column, and $E(K)_{\tors}\simeq H$, in the second column, where $K=\Q(\alpha)$ and $\alpha$ is a root of the irreducible quartic in the third column. Note that these examples correspond to pairs $(G,H)$ such that $G\in\Phi(1)$ and $H\in\Phi_\Q^\star(4,G)$ but $H\notin\Phi_\Q(2,G)$. Examples of curves with $H\in \Phi_\Q(2,G)$ can be found in Table 2 of \cite{GJT14}.

\begin{table}[h]
\caption{Examples of elliptic curves such that $G\in\Phi(1)$ and $H\in\Phi_\Q^\star(4,G)$ but $H\notin \Phi_\Q(2,G)$}\label{ex_4}
\begin{tabular}{|c|c|c|c|}
\hline
$G$ & $H$ & Quartic $K$ & Label of $E/\Q$ \\
\hline
%\endfirsthead
%\hline
%$G$ & $H$ & Quartic $K$ & Label of $E/\Q$ \\
%\hline
%\endhead
%%\hline
%\endfoot
%%\hline 
%\endlastfoot
\multirow{4}{*}{$\cC_{1}$}  & $\cC_{13}$ & $x^4 - x^3 - 6 x^2 + x + 1$ &  \texttt{2890d1} \\ \cline{2-4}
  & $\cC_{15}$ & $x^4 - 2 x^3 + 5 x^2 - 4 x + 19$ &  \texttt{50a4} \\ \cline{2-4}
  & $\cC_3\times\cC_3$ & $x^4 - 2 x^3 + 5 x^2 - 4 x + 19$ &  \texttt{175b2} \\ \cline{2-4}
  & $\cC_5\times\cC_5$ & $x^4 + x^3 + x^2 + x + 1$ &  \texttt{275b2} \\ \hline
\multirow{10}{*}{$\cC_{2}$}  & $\cC_{20}$ & $x^4 - 5 x^2 + 10$ &  \texttt{450a4} \\ \cline{2-4}
 & $\cC_{24}$ & $x^4 - 18 x^2 - 15$ &  \texttt{960o3} \\ \cline{2-4}
 & $\cC_2\times\cC_4$ & $x^4 - 5$ &  \texttt{15a5} \\ \cline{2-4}
 & $\cC_2\times\cC_8$ & $x^4 + 1$ &  \texttt{24a6} \\ \cline{2-4}
 & $\cC_2\times\cC_{12}$ & $x^4 - 2 x^3 + 5 x^2 - 4 x + 19$ &  \texttt{30a3} \\ \cline{2-4}
 & $\cC_2\times\cC_{16}$ & $x^4 - 4 x^3 + 17 x^2 - 26 x + 16$ &  \texttt{3150bk1} \\ \cline{2-4}
 & $\cC_3\times\cC_6$ & $x^4 - 2 x^3 + 11 x^2 - 10 x + 4$ &  \texttt{98a4} \\ \cline{2-4}
 & $\cC_4\times\cC_4$ & $x^4 + 1$ &  \texttt{64a4} \\ \cline{2-4}
 & $\cC_4\times\cC_8$ & $x^4 + 9$ &  \texttt{2880r6} \\ \cline{2-4}
 & $\cC_{6}\times\cC_{6}$ & $x^4 - 2 x^3 + 11 x^2 - 10 x + 4$ &  \texttt{98a3} \\ \hline
\multirow{4}{*}{$\cC_{4}$}   & $\cC_{16}$ & $x^4 - x^3 - 4 x^2 + 4 x + 1$ &  \texttt{15a7} \\\cline{2-4}
 & $\cC_{24}$ & $x^4 - 8 x^2 + 10$ &  \texttt{960o8} \\\cline{2-4}
 & $\cC_2\times\cC_{16}$ & $x^4 - 4 x^3 + 17 x^2 - 26 x + 16$ &  \texttt{1470k1} \\ \cline{2-4}
  & $\cC_4\times\cC_8$ & $x^4 + 9$ &  \texttt{240d6} \\ \hline
 $\cC_{5} $ & $\cC_5\times\cC_5$ & $x^4 - x^3 + x^2 - x + 1$ &  \texttt{11a1} \\ \hline
\multirow{3}{*}{$\cC_{6}$} & $\cC_{24}$ & $x^4 - 8 x^2 + 10$ &  \texttt{90c8} \\ \cline{2-4}
 & $\cC_2\times\cC_{12}$ & $x^4 - 2 x^3 + 5 x^2 - 4 x + 19$ &  \texttt{30a1} \\ \cline{2-4}
 & $\cC_{6}\times\cC_{6}$ & $x^4 - 2 x^3 + 11 x^2 - 10 x + 4$ &  \texttt{14a1} \\ \hline
\multirow{2}{*}{$\cC_{8}$}  & $\cC_2\times\cC_{16}$ & $x^4 - 4 x^3 + 17 x^2 - 26 x + 16$ &  \texttt{210e1} \\ \cline{2-4}
   & $\cC_4\times\cC_8$ & $x^4 + 9$ &  \texttt{15a4} \\ \hline
 $\cC_{10} $ & $\cC_{20}$ & $x^4 - 2 x^3 + x^2 + 2$ &  \texttt{66c1} \\ \hline
 $\cC_{12} $ & $\cC_{24}$ & $x^4 - 18 x^2 - 15$ &  \texttt{90c3} \\ \hline
\multirow{3}{*}{$ \cC_2\times\cC_2 $}    & $\cC_2\times\cC_{16}$ & $x^4 - x^3 - 4 x^2 + 4 x + 1$ &  \texttt{75b2} \\ \cline{2-4}
  & $\cC_4\times\cC_4$ & $x^4 - 2 x^3 + x^2 + 5$ &  \texttt{15a2} \\ \cline{2-4}
   & $\cC_4\times\cC_8$ & $x^4 - 2 x^3 + x^2 + 5$ &  \texttt{75b3} \\ \hline
\multirow{2}{*}{$ \cC_2\times\cC_4 $} & $\cC_2\times\cC_{16}$ & $x^4 - x^3 - 4 x^2 + 4 x + 1$ &  \texttt{15a3} \\ \cline{2-4}
 & $\cC_4\times\cC_8$ & $x^4 - 2 x^3 + x^2 + 5$ &  \texttt{15a1} \\ \hline
\multirow{2}{*}{$ \cC_2\times\cC_8 $} & $\cC_2\times\cC_{16}$ & $x^4 - 2 x^3 - 11 x^2 + 12 x + 186$ &  \texttt{210e2} \\ \cline{2-4}
 & $\cC_4\times\cC_8$ & $x^4 - 2 x^3 + 7 x^2 - 6 x + 2$ &  \texttt{210e2} \\ \hline
\end{tabular}
\end{table}

\section{Computations}\label{appendix}
Let $G\in \Phi(1)$ and let $d$ be a positive integer.  We define the set
$$
\mathcal{H}_{\Q}(d,G) = \{ S_1,...,S_n \}
$$
where $S_i= \left[ H_1,...,H_m \right]$ is a list of groups $H_j \in \Phi_{\mathbb Q}(d,G) \setminus \{ G \}$, such that, for each $i=1,\ldots,n$, there exists an elliptic curve $E_i$ defined over ${\mathbb Q}$ that satisfies the following properties:
\begin{itemize}
\item $E_i({\mathbb Q})_{\text{tors}} \simeq G$, and
\item There are number fields $K_1,...,K_m$ (non--isomorphic pairwise) of degree dividing $d$ with $E_i ( K_j )_\text{tors} \simeq H_j$, for all $j=1,...,m$;  and for each $j$ there does not exist $K'_j\subset K_j$ such that $E_i ( K'_j )_\text{tors} \simeq H_j$.
\end{itemize}
We are allowing the possibility of two (or more) of the $H_j$ being isomorphic. 

Note that a similar definition was first introduced in \cite{GJT15} for $d=2$ and generalized in \cite{GJNT15}. The second condition is a little bit different here, because of a new behavior that appears only for $d=4$ but not for $d=2,3$ (since they are primes), namely the existence of intermediate fields. For example, let $E$ be the elliptic curve \texttt{50a2}. Then $E({\mathbb Q})_{\text{tors}} \simeq \cC_1$ and $E({\mathbb Q(\sqrt{-3})})_{\text{tors}} \simeq \cC_3$ (see Example \ref{50a2}). In particular,  $E({\mathbb Q(\sqrt{-3},\sqrt{d})})_{\text{tors}} \simeq \cC_3$ for any squarefree integer $d\ne -3$. For this reason we have made the above change in the definition of $\mathcal{H}_{\Q}(d,G)$.

The sets $\mathcal{H}_{\Q}(d,G)$ have been determined for $d=2,3$ and for any $G\in \Phi(1)$ in \cite{GJT15,GJNT15}. In order to guess what $\mathcal{H}_{\Q}(4,G)$ may look like, we carried out an exhaustive computation in \texttt{Magma} \cite{magma} for all elliptic curves over $\Q$ with conductor less than $350.000 $ from \cite{cremonaweb} (a total of $2.188.263$ elliptic curves) but restricting to the non-sporadic case. That is, we have tried to compute the sets $\mathcal{H}^\star_{\Q}(4,G)$, which are similarly defined to the sets $\mathcal{H}_{\Q}(4,G)$  but restricting our attention to $H_j\in  \Phi^\star_{\mathbb Q}(4,G)$.

Moreover, it has been determined the maximum number of quadratic \cite{GJT15,N15b} and cubic \cite{GJNT15} fields where the torsion could grow. In the case of number fields of non prime degree the situation changes. As the example above of the elliptic curve \texttt{50a2} shows, there could be infinitely many non-isomorphic number fields where the torsion grows.  Let us define
$$
h_{\Q}(d)=\max_{G \in \Phi(1)} \Big\{ \#S \; \Big| \; S \in \mathcal{H}_{\Q}(d,G) \Big\}.
$$
Note that if $d$ is prime, then $h_{\Q}(d)$ coincides with the maximum number of number fields of degree $d$ where the torsion grows for a fixed elliptic curve $E/\Q$. The cases $h_{\Q}(2)=4$ and $h_{\Q}(3)=3$ have been determined in \cite{N15b,GJT15} and \cite{GJNT15}  respectively. Our computations (see Table \ref{tablagrande}) and in particular Example \ref{90c4} show that
$$
h_{\Q}(4)\ge 9.
$$

Table \ref{tablagrande} gives  all the torsion configurations over quartic fields (sets in $\mathcal{H}^\star_{\Q}(4,G)$ for any $G\in \Phi(1)$) that we have found. We have found {$130$} possible configurations. However, we have not tried to determine that those are all the possible cases. But note that the largest conductor where we needed to complete the table was {$14.400$}, far from $350.000$.  

In Table \ref{tablagrande}, the third column provides an elliptic curve $E/\Q$ with minimal conductor such that:
\begin{itemize}
\item the first column is $G\simeq E(\Q)_{\text{tors}} \in \Phi (1)$; 
\item the second is a torsion configuration $\left[ H_1,...,H_m \right]$, where $H_j \in \Phi^\star_{\mathbb Q}(4,G) \setminus \{ G \}$, such that there are number fields $K_1,...,K_m$ (non--isomorphic pairwise) of degree dividing $2$ or $4$ with $E ( K_j )_{tors} \simeq H_j$, for all $j=1,...,m$;  and for each $j$ there does not exist $K'_j\subset K_j$ such that $E_i ( K'_j )_{tors} \simeq H_j$.
\end{itemize}
In Table \ref{tablagrande}, we have abbreviated $\cC_n$ by $(n)$, and $\cC_n\times \cC_m$ by $(n,m)$. Moreover, if $H=\cC_n\times \cC_m$ appears for $s$ distinct fields, then we have written $(n,m)^s$ in the table. The corresponding fields $K_j$ for each torsion configuration can be found in the home page of the first author (at \href{http://www.uam.es/enrique.gonzalez.jimenez}{http://www.uam.es/enrique.gonzalez.jimenez}), together with a list of configurations for all curves of conductor up to $350.000$.

\begin{table}
\caption{Torsion configurations over quartic fields}\label{tablagrande}
\begin{tabular}{ccc}
\begin{tabular}{|c|l|c|}
\hline
$G$ & $\mathcal{H}^\star_{\Q}(4,G)$ & Label\\
\hline\hline
\multirow{13}{*}{$(1)$} &   $(3) $ & \texttt{19a2}\\
\cline{2-3}
& $(5) $ & \texttt{11a2}\\
\cline{2-3}
& $(7) $ & \texttt{208d1}\\
\cline{2-3}
& $(9) $ & \texttt{54a2}\\
\cline{2-3}
& $(13) $ & \texttt{2890d1}\\
\cline{2-3}
& $(3)^2 $ & \texttt{121b1}\\
\cline{2-3}
& $(3),(5) $ & \texttt{50a2}\\
\cline{2-3}
& $(3),(15) $ & \texttt{50b3}\\
\cline{2-3}
& $(5)^2 $ & \texttt{99d2}\\
\cline{2-3}
& $(5), (5,5) $ & \texttt{275b2}\\
\cline{2-3}
& $(3)^2,(5) $ & \texttt{338d1}\\
\cline{2-3}
& $(3),(5),(15) $ & \texttt{50a4}\\
\cline{2-3}
& $(3)^2, (3,3) $ & \texttt{175b2}\\
\hline
\hline
\multirow{25}{*}{$(2)$} & $(4), (2,2) $ & \texttt{46a1}\\
\cline{2-3}
& $(4), (2,6) $ & \texttt{36a3}\\
\cline{2-3}
& $(4), (2,10) $ & \texttt{450a3}\\
\cline{2-3}
& $ (2,2), (2,4) $ & \texttt{200b1}\\
\cline{2-3}
& $(4),(10), (2,2) $ & \texttt{66c3}\\
\cline{2-3}
& $(4), (2,2), (2,4) $ & \texttt{49a1}\\
\cline{2-3}
& $(4), (2,2), (2,10) $ & \texttt{1014c2}\\
\cline{2-3}
& $(4), (2,6), (2,12) $ & \texttt{1040g2}\\
\cline{2-3}
& $(8), (2,2), (2,4) $ & \texttt{294f1}\\
\cline{2-3}
& $(4)^2, (2,2), (2,4) $ & \texttt{120b1}\\
\cline{2-3}
& $(4)^2, (2,2), (4,4) $ & \texttt{320a4}\\
\cline{2-3}
& $(4)^2, (2,6), (2,12) $ & \texttt{450g1}\\
\cline{2-3}
& $(4),(6)^2, (2,2) $ & \texttt{726a2}\\
\cline{2-3}
& $(4),(6), (2,2), (2,6) $ & \texttt{14a3}\\
\cline{2-3}
& $(4),(6), (2,6), (6,6) $ & \texttt{98a3}\\
\cline{2-3}
& $(4),(8), (2,2), (2,8) $ & \texttt{45a1}\\
\cline{2-3}
& $(4),(10), (2,2), (2,10) $ & \texttt{150b3}\\
\cline{2-3}
& $(4),(12), (2,2), (2,12) $ & \texttt{30a3}\\
\cline{2-3}
& $(4),(16), (2,2), (2,16) $ & \texttt{3150bk1}\\
\cline{2-3}
& $(6)^2, (2,2), (2,4) $ & \texttt{256a1}\\
\hline
%\multicolumn{3}{c}{}\\
%\multicolumn{3}{c}{}\\
\end{tabular}
&
\begin{tabular}{|c|l|c|}
\hline
$G$ & $\mathcal{H}^\star_{\Q}(4,G)$  & Label\\
\hline\hline
\multirow{22}{*}{$(2)$} & $(6),(12), (2,2), (2,6) $ & \texttt{36a4}\\
\cline{2-3}
& $(8)^2, (2,2), (4,8) $ & \texttt{2880r6}\\
\cline{2-3}
& $(10),(20), (2,2), (2,10) $ & \texttt{450a4}\\
\cline{2-3}
& $(4)^2,(8), (2,2), (2,4) $ & \texttt{33a2}\\
\cline{2-3}
&  $(4)^2,(8), (2,2), (4,4) $ & \texttt{64a4}\\
\cline{2-3}
& $(4)^2, (2,2), (2,4)^2 $ & \texttt{33a4}\\
\cline{2-3}
& $(4)^2, (2,6), (2,12)^2 $ & \texttt{960o7}\\
\cline{2-3}
& $(4),(6), (2,2), (2,4), (2,6) $ & \texttt{130a4}\\
\cline{2-3}
& $(4),(8),(12), (2,2), (2,12) $ & \texttt{960e3}\\
\cline{2-3}
& $(4),(8),(16), (2,2), (2,8) $ & \texttt{63a1}\\
\cline{2-3}
& $(4),(8), (2,2), (2,4), (2,8) $ &  \texttt{144b1}\\
\cline{2-3}
& $(4),(12),(24), (2,2), (2,12) $ & \texttt{960o3}\\
\cline{2-3}
& $(4),(12), (2,2), (2,4), (2,12) $ & \texttt{720j3}\\
\cline{2-3}

& $(4)^2,(8)^2, (2,2), (2,4) $ & \texttt{45a3}\\
\cline{2-3}

& $(4),(8)^2, (2,2), (2,4),(2,8) $ & \texttt{24a6}\\
\cline{2-3}
& $(4),(8)^3, (2,2),(2,8) $ & \texttt{45a6}\\
\cline{2-3}

& $(4)^2,(8), (2,2), (2,4)^2 $ & \texttt{17a3}\\
\cline{2-3}
& $(4),(8),(16)^2, (2,2), (2,8) $ & \texttt{75b1}\\
\cline{2-3}

& $(4),(8)^3,(16), (2,2),  (2,8) $ &   \texttt{75b6}\\
\cline{2-3}

& $(4),(8)^2,(16), (2,2), (2,4), (2,8) $ & \texttt{510e7}\\
\cline{2-3}
& $(4)^2,(8)^2, (2,2), (2,4)^2 $ & \texttt{63a6}\\
\cline{2-3}
& $(4),(6)^2, (2,2), (2,6)^2, (3,6) $ & \texttt{112c3}\\
\cline{2-3}
& $(4),(8),(16)^2, (2,2), (2,4), (2,8) $ & \texttt{1470k3}\\
\cline{2-3}
& $(6)^2,(12), (2,2), (2,6)^2, (3,6) $ & \texttt{98a4}\\
\cline{2-3}

& $(4),(8)^2,(16)^2, (2,2), (2,4), (2,8) $ & \texttt{1680p1}\\
\cline{2-3}

& $(4)^2,(6),(12)^2, (2,2), (2,4), (2,6) $ & \texttt{30a7}\\
\cline{2-3}
& $(4)^2,(8)^4, (2,2), (2,4) $ & \texttt{630c6}\\
\cline{2-3}
& $(4)^2,(8)^4, (2,2), (4,4) $ & \texttt{4410r6}\\
\cline{2-3}
& $(4)^2,(8)^3, (2,2), (2,4)^2 $ & \texttt{15a5}\\
\cline{2-3}
& $(4)^2,(6),(8),(12)^2, (2,2), (2,4), (2,6) $ & \texttt{90c5}\\
\cline{2-3}
& $(4)^2,(6),(12)^2, (2,2), (2,4)^2, (2,6) $ & \texttt{90c4}\\
\hline
\multirow{2}{*}{$(3)$} &  $(15)$ & \texttt{50a1}\\
\cline{2-3}
& $ (3,3) $ & \texttt{19a1} \\
\hline
%\multicolumn{3}{c}{}\\

\end{tabular}
\end{tabular}
\end{table}

%\newpage

\begin{table}
\begin{tabular}{ccc}
\begin{tabular}{|c|l|c|}
\hline
$G$ & $\mathcal{H}^\star_{\Q}(4,G)$ & Label \\
\hline\hline
\hline
\multirow{17}{*}{$(4)$} &  $(8), (2,4) $ & \texttt{33a3}\\
\cline{2-3}
& $(8), (2,8) $ & \texttt{192c6}\\
\cline{2-3}
& $(8), (2,12) $ & \texttt{150c3}\\
\cline{2-3}
& $(8), (4,4) $ & \texttt{40a4}\\
\cline{2-3}
& $ (2,4), (2,8) $ & \texttt{64a3}\\
\cline{2-3}
& $(8), (2,4), (2,8) $ & \texttt{17a4}\\
\cline{2-3}
& $(8), (2,4), (4,4) $ & \texttt{17a1}\\
\cline{2-3}
& $(8), (2,8), (2,16) $ & \texttt{1470k1}\\
\cline{2-3}
& $(8)^2, (2,4), (2,8) $ & \texttt{24a3}\\
\cline{2-3}
& $(8)^2, (2,8), (4,8) $ & \texttt{240d6}\\
\cline{2-3}
& $(8),(12), (2,4), (2,12) $ & \texttt{90c1}\\
\cline{2-3}
& $(12),(24), (2,4), (2,12) $ &  \texttt{960o8}\\
\cline{2-3}
 & $(8)^2,(16), (2,4), (2,8) $ & \texttt{21a4}\\
\cline{2-3}
& $(8)^2,(16)^2, (2,4), (2,8) $ & \texttt{15a7}\\
\cline{2-3}
& $(8)^2, (2,4), (2,8)^2, (4,4) $ & \texttt{195a6}\\
\cline{2-3}
& $(8)^2,(16)^3, (2,4), (2,8) $ & \texttt{1230f4}\\
\cline{2-3}
& $(8)^2,(16)^2, (2,4), (2,8)^2, (4,4)$ & \texttt{210e6}  \\
\hline
\hline
\multirow{2}{*}{$(5)$} & $(15)$ & \texttt{50b1}\\
\cline{2-3}
& $ (5,5) $ & \texttt{11a1}\\
\hline
\hline
\multirow{6}{*}{$(6)$} &  $(12), (2,6) $ & \texttt{14a4}\\
\cline{2-3}
& $(12), (2,6), (2,12) $ & \texttt{130a2}\\
\cline{2-3}
& $(12)^2, (2,6), (2,12) $ & \texttt{30a1}\\
\cline{2-3}
& $(12), (2,6), (3,6), (6,6) $ & \texttt{14a1}\\
\cline{2-3}
& $(12)^2,(24), (2,6), (2,12)$ & \texttt{90c8}\\
\cline{2-3}
& $(12)^2, (2,6), (2,12)^2 $ & \texttt{90c7}\\
\hline
\hline
\multirow{4}{*}{$(8)$} & $(16), (2,8) $ & \texttt{21a3}\\
\cline{2-3}
& $(16), (2,8), (2,16) $ & \texttt{1230f1}\\
\cline{2-3}
& $(16), (2,8), (4,8) $ & \texttt{15a4}\\
\cline{2-3}
& $(16)^2, (2,8), (2,16)$ & \texttt{210e1}\\
\hline
\hline
\multirow{1}{*}{$(10)$} & $(20), (2,10)$ &\texttt{66c1} \\
\hline
\hline
\multirow{1}{*}{$(12)$} & $(24), (2,12) $ &\texttt{90c3} \\
\hline

\end{tabular}

&

\begin{tabular}{|c|l|c|}
\hline
$G$ & $\mathcal{H}^\star_{\Q}(4,G)$  & Label\\
\hline\hline
\multirow{26}{*}{$(2,2)$}  & $(2,4), (4,4) $ & \texttt{64a1}\\ %$ (2,4) $ & \texttt{33a1}\\
%\cline{2-3}
%& $ (2,4), (2,8) $ & \texttt{45a5}\\
\cline{2-3}

& $ (2,4)^3 $ & \texttt{33a1}\\
\cline{2-3}
& $ (2,4)^2, (2,8) $ & \texttt{63a2}\\
\cline{2-3}
& $ (2,4)^2, (2,12) $ & \texttt{960o6}\\
\cline{2-3}
& $ (2,4)^2, (4,4) $ & \texttt{17a2}\\
\cline{2-3}
& $ (2,4)^2, (4,8) $ & \texttt{1200j4}\\
\cline{2-3}
%& $ (2,4), (2,6), (2,12) $ & \texttt{90c2}\\
%\cline{2-3}
%& $ (2,4), (2,8)^2 $ & \texttt{45a2}\\
%\cline{2-3}
& $ (2,4), (2,8), (4,8) $ & \texttt{75b3}\\
\cline{2-3}

& $ (2,4)^3, (2,6) $ & \texttt{210a6}\\
\cline{2-3}
& $ (2,4)^3, (4,4) $ & \texttt{231a3}\\
\cline{2-3}
& $ (2,4)^2, (2,6), (2,12) $ & \texttt{30a6}\\
\cline{2-3}
& $ (2,4)^2, (2,8), (2,16) $ & \texttt{75b2}\\
\cline{2-3}
& $ (2,4)^2, (2,8), (4,4) $ & \texttt{40a1}\\
\cline{2-3}
& $ (2,4)^2, (2,8), (4,8) $ & \texttt{510e5}\\
\cline{2-3}
& $ (2,4), (2,6), (2,12)^2 $ &\texttt{14400bo6} \\
\cline{2-3}
& $ (2,4)^3, (2,8), (4,4) $ & \texttt{21a2}\\
\cline{2-3}
& $ (2,4)^2, (2,8)^2, (4,4) $ & \texttt{75b5}\\
\cline{2-3}
& $ (2,4), (2,6), (2,12)^3 $ & \texttt{150c6}\\
\cline{2-3}

& $ (2,4)^3, (2,8)^2, (4,4) $ & \texttt{42a3}\\
\cline{2-3}
& $ (2,4)^2, (2,8)^3, (4,4) $ & \texttt{294c2}\\
\cline{2-3}

& $ (2,4)^3, (2,8)^3, (4,4) $ & \texttt{15a2}\\
\cline{2-3}
& $ (2,4)^2, (2,8)^4, (4,4) $ & \texttt{6720cd4}\\
\cline{2-3}
& $ (2,4)^3, (2,8)^4, (4,4)$ & \texttt{210e5} \\
\hline
\hline
\multirow{6}{*}{$(2,4)$} & $ (2,8)^2, (4,4) $ & \texttt{21a1}\\
\cline{2-3}
& $ (2,8)^2, (4,8) $ & \texttt{1230f2}\\
\cline{2-3}
& $ (2,8)^2, (2,16), (4,4) $ & \texttt{15a3}\\
\cline{2-3}
& $ (2,8), (4,4), (4,8) $ & \texttt{15a1}\\
\cline{2-3}
& $ (2,8)^2, (4,4), (4,8)$ & \texttt{210e3}\\
\hline
\hline
\multirow{1}{*}{$(2,6)$} &  $ (2,12)^3 $ & \texttt{30a2}\\
%\cline{2-3}
%& $ (2,12)^3$ & \texttt{30a2} \\
\hline
\hline
$(2,8)$ & $ (2,16)^2, (4,8)$ & \texttt{210e2}\\
\hline
\multicolumn{3}{c}{}\\
\multicolumn{3}{c}{}\\
\end{tabular}
\end{tabular}
\end{table}

\end{document}